\newtheorem{lemma}{Lemma}[section]
\newtheorem{theorem}{Theorem}[section]
\newtheorem{corollary}{Corollary}[section]
\newtheorem{definition}{Definition}[section]
\newtheorem{problem}{Problem}[section]
\newtheorem{proposition}{Proposition}[section]
\newcounter{cas}
\newtheoremstyle{assert}
  {.5\baselineskip±.2\baselineskip}   
  {.5\baselineskip±.2\baselineskip}   
  {\itshape}  
  {0pt}       
  {\bfseries} 
  {.}         
  {5pt plus 1pt minus 1pt} 
  {(\thmnumber{#2})}          
\theoremstyle{assert}
\newtheorem{as}[cas]{}
\def\aqedsymbol{\ifmmode$\lrcorner$\else{\unskip\nobreak\hfil
\penalty50\hskip1em\null\nobreak\hfil$\lrcorner$
\parfillskip=0pt\finalhyphendemerits=0\endgraf}\fi} 
\newcommand{\aqed}{\renewcommand{\qed}{\aqedsymbol}}
\DeclareMathOperator{\mP}{\mathbb{P}}
\DeclareMathOperator{\mE}{\mathbb{E}}
\DeclareMathOperator{\brn}{br}
\DeclareMathOperator{\bin}{Bin}
\newcommand{\tih}{\tilde{h}}
\newcommand{\floor}[1]{\left\lfloor{#1}\right\rfloor}
\date{}
\newcounter{num}
\begin{document}
\title{Bipartite independence number in graphs with bounded maximum degree}
\date{\vspace{-5ex}}
\author{
    Maria Axenovich
    \thanks{
    	Karlsruhe Institute of Technology, Karlsruhe, Germany;
        email:
        \mbox{\texttt{maria.aksenovich@kit.edu}}.
	}     
    \and
    Jean-S{\'e}bastien Sereni
    \thanks{
        Centre National de la Recherche Scientifique, ICube (CSTB), Strasbourg, France;
        email: \mbox{\texttt{sereni@kam.mff.cuni.cz}}.
    }
    \and
     Richard Snyder
     \thanks{
     	Karlsruhe Institute of Technology, Karlsruhe, Germany;
	email: \mbox{\texttt{richard.snyder@kit.edu}}.
	}
	\and
	Lea Weber
	\thanks{
	Karlsruhe Institute of Technology, Karlsruhe, Germany;
	email: \mbox{\texttt{lea.weber@kit.edu}}.
	}
}

\maketitle
\singlespace

\begin{abstract}
    \setlength{\parskip}{\medskipamount}
    \setlength{\parindent}{0pt}
    \noindent
    We consider a natural, yet seemingly not much studied,
    extremal problem in bipartite graphs. A \emph{bi-hole} of size~$t$ in a bipartite graph $G$ is a copy
    of~$K_{t, t}$ in the bipartite complement of $G$. Let~$f(n, \Delta)$ be the largest~$k$ for which
    every $n \times n$ bipartite graph with maximum degree $\Delta$ in one of the parts has a bi-hole of
    size~$k$. Determining $f(n, \Delta)$ is thus the bipartite analogue of finding the largest
    independent set in graphs with a given number of vertices and bounded maximum degree. Our main result
    determines the asymptotic behavior of $f(n, \Delta)$. More precisely, we show that for large but
    fixed $\Delta$ and $n$ sufficiently large, $f(n, \Delta) = \Theta(\frac{\log \Delta}{\Delta} n)$.  We
    further address more specific regimes of $\Delta$, especially when $\Delta$ is a small fixed
    constant. In particular, we determine~$f(n, 2)$ exactly and obtain bounds for $f(n, 3)$, though
    determining the precise value of~$f(n, 3)$ is still open.
 \end{abstract}

\section{Introduction}\label{sec:intro}

The problem of finding~$g(n, \Delta)$, the smallest possible  size of a largest independent set in an
$n$-vertex  graph with given maximum degree~$\Delta$ is not very difficult. Indeed, one can consider the
graph that is the disjoint union of~$\lfloor n/(\Delta+1)\rfloor$ complete graphs on~$\Delta+1$ vertices
each and a complete graph on the remaining vertices. This shows that $g(n,\Delta) \leq
\lceil n/(\Delta + 1)\rceil$. On the other hand, every $n$-vertex graph of maximum degree~$\Delta$
contains an independent set of size~$\lceil n/(\Delta+1)\rceil$, obtained for example by the greedy
algorithm. Consequently, $g(n,\Delta)=\lceil n/(\Delta+1)\rceil$.  The situation is more interesting for
regular graphs, see Rosenfeld~\cite{Ros} for a more detailed analysis. The analogous problem in the
bipartite setting is more complex: determining the smallest possible `bipartite independence number' of a
bipartite graph with maximum degree $\Delta$ is still unresolved, even for $\Delta = 3$. To make this
precise, we shall start with a few definitions.

A subgraph of the complete bipartite graph~$K_{n,n}$ with~$n$ vertices in each part is called an $n\times
n$ bipartite graph.  A \emph{bi-hole} of size~$k$ in a bipartite graph~$G = (A \cup B, E)$  with a given
bipartition~$A, B$, is a pair~$(X, Y)$ with $X \subseteq A$, $Y \subseteq B$ such that $|X| = |Y| = k$,
and such that there are no edges of~$G$ with one endpoint in $X$ and the other endpoint in~$Y$.  
Thus, the size of the largest
bi-hole can be viewed as a bipartite version of the usual independence number.  This work is devoted to
studying the behavior of this function.

For a graph $G$, we denote the degree of a vertex $x$ by $\deg_G(x)$ or $\deg(x)$, the number of edges by $e(G)$,  the number of vertices by $|G|$, and the maximum degree by $\Delta(G)$. We write~$\log$ for the natural logarithm.

\begin{definition}
Let
$f(n, \Delta)$ be the largest integer $k$ such that any $n \times n$ bipartite graph $G = (A \cup B, E)$
with $\deg(a) \le \Delta$ for all~$a \in A$ contains a bi-hole of size $k$.  Let $f^*(n, \Delta)$ be  the largest~$k$ such that any
$n \times n$ bipartite graph~$G$ with~$\Delta(G) \le \Delta$ contains a bi-hole of size~$k$. 
\end{definition}

While $f(n, \Delta)$ is defined by restricting the maximum degree in one part of the graph,  $f^*(n, \Delta)$ is its  `symmetric' version.
Observe that  $f(n, \Delta) \le f^*(n, \Delta)$ for any natural numbers~$n$ and~$\Delta$ for which these functions are defined. 

\begin{theorem}\label{thm:lower}
There exists an integer~$\Delta_0$  and a positive constant $c$ such that the following holds.
    For any~$\Delta \ge \Delta_0$ there is~$N_0=N_0(\Delta)\ge5\Delta\log\Delta$ such that for
    any~$n>N_0$, \[
	f(n, \Delta) \ge \frac12\cdot\frac{\log \Delta}{\Delta} n.  \]
	In addition, $f^*(n, \Delta) \geq c\frac{\log \Delta}{\Delta} n$.
\end{theorem}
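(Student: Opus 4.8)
The plan is to prove the lower bound on $f(n,\Delta)$ by a single first-moment argument and to deduce the bound on $f^*(n,\Delta)$ from it. Fix an $n\times n$ bipartite graph $G=(A\cup B,E)$ with $\deg(a)\le\Delta$ for every $a\in A$, and set $k=\ceil{\tfrac12\cdot\tfrac{\log\Delta}{\Delta}n}$; for $n>N_0(\Delta)$ with $N_0$ a sufficiently large multiple of $\Delta$ (we may take $N_0=5\Delta\log\Delta$) one has $k\le n-\Delta$. Choose $Y\subseteq B$ uniformly at random among all $k$-element subsets and put $A'=\{a\in A:N(a)\cap Y=\emptyset\}$. If $\mE[|A'|]\ge k$, then some choice of $Y$ satisfies $|A'|\ge k$, and taking any $X\subseteq A'$ with $|X|=k$ yields a bi-hole $(X,Y)$ of size $k$, since no vertex of $X$ has a neighbour in $Y$. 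So it suffices to show $\mE[|A'|]\ge k$.

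Because whether $a$ lies in $A'$ depends only on whether the random set $Y$ avoids the $\deg(a)\le\Delta$ neighbours of $a$, linearity of expectation gives
\[
\mE[|A'|]=\sum_{a\in A}\frac{\binom{n-\deg(a)}{k}}{\binom{n}{k}}\ \ge\ n\cdot\frac{\binom{n-\Delta}{k}}{\binom{n}{k}}=n\prod_{i=0}^{\Delta-1}\frac{n-k-i}{n-i}\ \ge\ n\Bigl(1-\tfrac{k}{n-\Delta}\Bigr)^{\Delta}.
\]
Here $\tfrac{k}{n-\Delta}\le\tfrac{n}{n-\Delta}\cdot\tfrac{\log\Delta}{2\Delta}+\tfrac{1}{n-\Delta}$, and for $n>N_0(\Delta)$ the factor $\tfrac{n}{n-\Delta}$ can be made as close to $1$ as we wish; applying $\log(1-x)\ge-x-x^2$ (valid for $0\le x\le\tfrac12$, hence applicable as $\tfrac{k}{n-\Delta}$ is close to $\tfrac{\log\Delta}{2\Delta}<\tfrac12$) we obtain $\bigl(1-\tfrac{k}{n-\Delta}\bigr)^{\Delta}\ge\Delta^{-1/2-o(1)}$, up to an absolute constant factor, where the $o(1)$ tends to $0$ as $\Delta\to\infty$. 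Hence $\mE[|A'|]\ge n\,\Delta^{-1/2-o(1)}$, which exceeds $k\le\tfrac12\tfrac{\log\Delta}{\Delta}n+1$ once $\Delta\ge\Delta_0$, because $\Delta^{1/2-o(1)}\gg\log\Delta$. This establishes $f(n,\Delta)\ge\tfrac12\tfrac{\log\Delta}{\Delta}n$.

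For $f^*$, note that every $n\times n$ bipartite graph with $\Delta(G)\le\Delta$ has all $A$-degrees at most $\Delta$, so $f^*(n,\Delta)\ge f(n,\Delta)$ and the bound just proved already gives $f^*(n,\Delta)\ge\tfrac12\tfrac{\log\Delta}{\Delta}n$ in the same range of $n$; moreover the identical computation, run with the smaller target $k=\ceil{c\tfrac{\log\Delta}{\Delta}n}$ for any fixed $c<\tfrac12$, yields $\mE[|A'|]\ge k$ already for $n\ge2\Delta$ (and $\Delta\ge\Delta_0$), hence $f^*(n,\Delta)\ge c\tfrac{\log\Delta}{\Delta}n$ there. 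The only genuine difficulty is quantitative: to reach the constant $\tfrac12$ (rather than something strictly smaller) one must take $N_0$ large enough in terms of $\Delta$ that $\tfrac{k}{n-\Delta}$ is truly close to $\tfrac{\log\Delta}{2\Delta}$ and that the gap in $\Delta^{1/2}\gg\log\Delta$ absorbs the lower-order error terms and the hidden constant factor; there is no concentration issue or structural obstacle beyond this bookkeeping.
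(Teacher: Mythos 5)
Your proof is correct, and it takes a genuinely simpler route than the paper's. You choose a uniformly random $k$-subset $Y\subseteq B$ with $k=\lceil\tfrac12\tfrac{\log\Delta}{\Delta}n\rceil$ and compute directly that the expected number of vertices of $A$ with no neighbour in $Y$ is at least
\[
n\cdot\frac{\binom{n-\Delta}{k}}{\binom{n}{k}}\ \ge\ n\Bigl(1-\tfrac{k}{n-\Delta}\Bigr)^{\Delta}\ \gtrsim\ n\,\Delta^{-1/2},
\]
which comfortably exceeds $k\approx\tfrac12\tfrac{\log\Delta}{\Delta}n$ once $\Delta$ is large and $n\ge 5\Delta\log\Delta$ (the $o(1)$ error in the exponent is controlled exactly by those two conditions, as you note). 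The paper instead takes a detour: it selects a random set $S\subseteq B$ of size $(1-2x)n-2$ with $x=\tfrac12\tfrac{\log\Delta}{\Delta}$, counts the vertices of $A$ with \emph{at least} $\Delta-2$ neighbours in $S$ (equivalently, at most two neighbours in $B\setminus S$), shows in expectation there are at least $|B\setminus S|=2xn+2$ of them, and then invokes the exact formula $f(m,2)=\lceil m/2\rceil-1$ from Lemma~3.1 on the induced $(2xn+2)\times(2xn+2)$ bipartite graph with one-sided maximum degree $2$, extracting a bi-hole of size $xn$. Both routes are legitimate first-moment arguments and both reach the stated constant $\tfrac12$, but yours avoids the auxiliary graph-theoretic lemma entirely; in fact, the direct calculation allows any constant $c<1$ in place of $\tfrac12$ at the cost of a larger $\Delta_0$ (since it reduces to $\Delta^{1-c}\gg c\log\Delta$), whereas the paper's two-stage argument is bottlenecked at $\tfrac12$ by the application of Lemma~3.1. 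The $f^{*}$ clause, as you say, is immediate from $f^{*}(n,\Delta)\ge f(n,\Delta)$ in the same range of $n$; the extra observation about $n\ge 2\Delta$ is not needed for the theorem as stated but is a correct remark.
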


\begin{theorem}\label{thm:upper}
    Let~$\Delta$ be an integer, $\Delta \geq 27$. If~$n\ge \frac{\Delta}{\log\Delta}$,
    then \[
        f(n,\Delta) \le 8\cdot\frac{\log\Delta}{\Delta}n.  \]
\end{theorem}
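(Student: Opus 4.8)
The plan is to use the probabilistic method. Put $m:=\floor{8\cdot\frac{\log\Delta}{\Delta}n}+1$. We will produce a random $n\times n$ bipartite graph $G=(A\cup B,E)$ in which \emph{every} vertex of $A$ has degree exactly $\Delta$, and show that with positive probability $G$ has no bi-hole of size $m$. Since $f(n,\Delta)$ is an integer, exhibiting one such $G$ yields $f(n,\Delta)\le m-1\le 8\cdot\frac{\log\Delta}{\Delta}n$, which is the assertion. The hypothesis $\Delta\ge 27$ is what makes $8\log\Delta<\Delta$, so $m\le n$ and ``bi-hole of size $m$'' is not vacuous; the hypothesis $n\ge\Delta/\log\Delta$ guarantees $m\ge 9$, keeping the statement non-degenerate.

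For the construction, let each $a\in A$ independently choose its neighbourhood $N(a)$ to be a uniformly random $\Delta$-element subset of $B$, and let $G$ be the resulting bipartite graph. Then $\deg_G(a)=\Delta$ for every $a\in A$, so $G$ is admissible. Fix $X\subseteq A$ and $Y\subseteq B$ with $|X|=|Y|=m$. The pair $(X,Y)$ is a bi-hole precisely when $N(a)\cap Y=\emptyset$, i.e.\ $N(a)\subseteq B\setminus Y$, for every $a\in X$; these $m$ events are mutually independent, each of probability
\[ \frac{\binom{n-m}{\Delta}}{\binom{n}{\Delta}}=\prod_{i=0}^{\Delta-1}\frac{n-m-i}{n-i}\le\Bigl(1-\frac{m}{n}\Bigr)^{\!\Delta}\le e^{-\Delta m/n}. \]
Hence $\mathbb{P}\bigl[(X,Y)\text{ is a bi-hole}\bigr]\le e^{-\Delta m^{2}/n}$, and a union bound over the $\binom{n}{m}^{2}$ choices of $(X,Y)$ gives
\[ \mathbb{P}\bigl[G\text{ has a bi-hole of size }m\bigr]\le\binom{n}{m}^{2}e^{-\Delta m^{2}/n}\le\Bigl(\frac{en}{m}\Bigr)^{\!2m}e^{-\Delta m^{2}/n}=\exp\!\Bigl(m\bigl(2\log\tfrac{en}{m}-\tfrac{\Delta m}{n}\bigr)\Bigr). \]

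It remains to check that this exponent is negative. Since $m>8\cdot\frac{\log\Delta}{\Delta}n$, we have $\frac{\Delta m}{n}>8\log\Delta$ and $\frac{en}{m}<\frac{e\Delta}{8\log\Delta}<\Delta$, the last inequality holding because $8\log\Delta>e$ when $\Delta\ge 27$. Hence $2\log\frac{en}{m}<2\log\Delta\le 8\log\Delta<\frac{\Delta m}{n}$, so the exponent is strictly negative and the displayed probability is $<1$. Therefore some outcome $G$ is admissible and contains no bi-hole of size $m$, which completes the argument.

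The only genuine subtlety is imposing the \emph{hard} degree cap $\deg(a)\le\Delta$ rather than merely $\mathbb{E}[\deg(a)]=\Delta$ (as in the binomial model $G(n,n,\Delta/n)$): the uniform-random-$\Delta$-subset model enforces this automatically, at essentially no cost in the first-moment estimate. Everything else is a routine union-bound computation, and the constant $8$ is comfortably generous and not optimised here, so no delicate handling of constants is required.
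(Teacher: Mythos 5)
Your proof is correct and takes a genuinely different, and in fact cleaner, route than the paper's. The paper works with the binomial bipartite random graph $G(2n, 2n, \Delta'/(2n))$ with $\Delta' = \Delta/2$, where the degree bound only holds in expectation: it must then invoke a Chernoff bound to show that at least half of one side has degree at most $\Delta$, intersect with the event that no bi-hole of size $\tfrac{2(2n)\log\Delta'}{\Delta'}$ exists, and finally prune the graph down to an $n\times n$ subgraph — and this doubling of $n$ and halving of $\Delta$ is exactly where the constant $8$ comes from. You instead sample each $N(a)$ as a uniform $\Delta$-subset of $B$, so the degree condition holds deterministically, and a single first-moment union bound finishes the job with no trimming step and no concentration inequality. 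This is simpler, and one could even push your argument to a constant noticeably smaller than $8$ (roughly anything bigger than $2$ works asymptotically), though of course that is not needed here.

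One small point you should patch: your construction requires $n \ge \Delta$ so that a $\Delta$-element subset of $B$ exists, but the theorem's hypothesis $n \ge \Delta/\log\Delta$ does not force this (e.g.\ $\Delta = 27$, $n = 10$ is allowed). In that residual range $\Delta/\log\Delta \le n < \Delta$ the construction is undefined; however the claim is vacuously true there, since $K_{n,n}$ itself has all degrees $n \le \Delta$ and no bi-hole of size $1$, so $f(n,\Delta)=0 \le 8\tfrac{\log\Delta}{\Delta}n$. A one-line remark disposing of the case $n < \Delta$ closes this gap. Also, your aside that the hypothesis $n\ge\Delta/\log\Delta$ "guarantees $m\ge 9$" is true but plays no role in the argument; nothing in the estimate uses a lower bound on $m$ beyond $m\ge 1$.
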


\noindent
Note that the expression in the upper bound provided by \Cref{thm:upper} is trivial if~$\Delta<27$ since
then $8\cdot\frac{\log\Delta}{\Delta}\geq 1$. 

Theorems~\ref{thm:upper} and~\ref{thm:lower} thus determine~$f(n, \Delta)$ asymptotically for
sufficiently large, but fixed~$\Delta$ and growing~$n$. We state this concisely in the following
corollary.
\begin{corollary}\label{cor:main}
There exists an integer~$\Delta_0$ such that the following holds.  For any~$\Delta \ge \Delta_0$ there
    is~$N_0=N_0(\Delta)$ such that for any~$n>N_0$, \[
    \frac12\cdot\frac{\log \Delta}{\Delta} n \le f(n, \Delta) \le 8\cdot\frac{\log \Delta}{\Delta}n.  \]
\end{corollary}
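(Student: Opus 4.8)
The plan is to obtain \Cref{cor:main} as an immediate consequence of Theorems~\ref{thm:lower} and~\ref{thm:upper}, the only work being to reconcile the hypotheses of the two statements. First I would let $\Delta_0$ be the maximum of $27$ and the constant $\Delta_0$ supplied by \Cref{thm:lower}; then for every $\Delta \ge \Delta_0$ both theorems are applicable. Fix such a~$\Delta$ and let $N_0 = N_0(\Delta)$ be the threshold furnished by \Cref{thm:lower}, which in particular satisfies $N_0(\Delta) \ge 5\Delta\log\Delta$.

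Next I would check that this same $N_0$ also meets the (weaker) hypothesis $n \ge \Delta/\log\Delta$ of \Cref{thm:upper}. This reduces to the elementary inequality $5\Delta\log\Delta \ge \Delta/\log\Delta$, equivalently $5(\log\Delta)^2 \ge 1$, which holds for all $\Delta \ge 2$ and hence certainly for all $\Delta \ge \Delta_0 \ge 27$. Consequently, for every $n > N_0$ the hypotheses of both theorems are satisfied, \Cref{thm:lower} yields $f(n,\Delta) \ge \tfrac12\cdot\tfrac{\log\Delta}{\Delta}n$, and \Cref{thm:upper} yields $f(n,\Delta) \le 8\cdot\tfrac{\log\Delta}{\Delta}n$, which is precisely the asserted chain of inequalities.

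Since the whole argument is just bookkeeping of quantifiers together with the observation that the lower-bound range of validity is contained in that of the upper bound, there is no genuine obstacle here; the only line that requires verification is the comparison $5\Delta\log\Delta \ge \Delta/\log\Delta$ of the two thresholds, and that is immediate.
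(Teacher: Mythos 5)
Your proposal is correct and matches the paper's (implicit) reasoning: the corollary is simply the conjunction of Theorems~\ref{thm:lower} and~\ref{thm:upper}, with the only bookkeeping being that the lower bound's threshold $N_0(\Delta)\ge5\Delta\log\Delta$ already dominates the upper bound's requirement $n\ge\Delta/\log\Delta$. The check $5(\log\Delta)^2\ge1$ for $\Delta\ge27$ is exactly what is needed to justify taking a single $N_0$.
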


\begin{corollary} There is a $\Delta_0$ such that if $\Delta>\Delta_0$, then 
$f^*(n, \Delta) = \Theta(\frac{\log \Delta}{\Delta}n)$. 
\end{corollary}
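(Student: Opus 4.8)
The lower bound $f^*(n,\Delta)\ge c\,\frac{\log\Delta}{\Delta}n$ is exactly the second assertion of \Cref{thm:lower}, so the whole task is to match it with an upper bound $f^*(n,\Delta)=O\!\left(\frac{\log\Delta}{\Delta}n\right)$, valid for all sufficiently large~$\Delta$ and all~$n$ large enough in terms of~$\Delta$. I emphasise that this does \emph{not} follow from \Cref{thm:upper}: every graph with $\Delta(G)\le\Delta$ also satisfies the one-sided degree condition, so we only obtain $f(n,\Delta)\le f^*(n,\Delta)$, which is the wrong direction. Hence I need to exhibit, for each admissible pair $(n,\Delta)$, a single $n\times n$ bipartite graph~$G$ with $\Delta(G)\le\Delta$ and no bi-hole of size~$\omega\!\left(\frac{\log\Delta}{\Delta}n\right)$.

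The plan is to take a random $\Delta$-regular bipartite graph and run a first moment argument. Concretely, let $M_1,\dots,M_\Delta$ be independent uniformly random perfect matchings between $A$ and $B$, where $|A|=|B|=n$, and let $G$ be their union. The multigraph~$G$ is $\Delta$-regular, and whether a pair $(X,Y)$ is a bi-hole depends only on the underlying simple graph, whose maximum degree is at most~$\Delta$; so a bi-hole-free conclusion about~$G$ transfers to a genuine simple graph with $\Delta(G)\le\Delta$. Fix $X\subseteq A$, $Y\subseteq B$ with $|X|=|Y|=k$. A single matching~$M_i$ avoids $X\times Y$ precisely when it maps~$X$ injectively into $B\setminus Y$, which happens with probability
\[
q \;=\; \prod_{i=0}^{k-1}\frac{n-k-i}{\,n-i\,}\;\le\;\Bigl(1-\tfrac{k}{n}\Bigr)^{k}\;\le\;e^{-k^2/n},
\]
and by independence $\mP[(X,Y)\text{ is a bi-hole of }G]=q^{\Delta}\le e^{-\Delta k^2/n}$. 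A union bound over the $\binom{n}{k}^2$ choices of $(X,Y)$ then gives
\[
\mE\bigl[\#\{\text{bi-holes of size }k\}\bigr]\;\le\;\binom{n}{k}^{2}e^{-\Delta k^2/n}\;\le\;\exp\!\Bigl(2k\log\tfrac{en}{k}-\tfrac{\Delta k^2}{n}\Bigr).
\]

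It remains to choose~$k$. Taking $k=\ceil{3\,\frac{\log\Delta}{\Delta}\,n}$ (which satisfies $1\le k\le n/2$ once $\Delta$ and~$n$ are large enough), we have $\Delta k^2/n\ge 3k\log\Delta$, while $en/k\le e\Delta/(3\log\Delta)\le\Delta$ for~$\Delta$ large, so $2k\log(en/k)\le 2k\log\Delta$; hence the exponent above is at most $-k\log\Delta<0$ and the expectation is less than~$1$. Therefore some outcome of~$G$ has no bi-hole of size~$k$, which yields $f^*(n,\Delta)<k=O\!\left(\frac{\log\Delta}{\Delta}n\right)$, and combined with the lower bound this gives $f^*(n,\Delta)=\Theta\!\left(\frac{\log\Delta}{\Delta}n\right)$. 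Conceptually there is no serious obstacle here: the first moment computation is of the same flavour as the one behind \Cref{thm:upper}. The only point requiring a little care is that the extremal example must have \emph{both} sides of bounded degree, which is why one uses the random-permutation model above rather than letting each vertex of~$A$ pick~$\Delta$ random neighbours in~$B$ (for which the $B$-degrees would only be concentrated around~$\Delta$, not bounded by it); one should also keep track of the range of~$n$ for which the chosen~$k$ is admissible.
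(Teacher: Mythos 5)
Your proposal is correct, and for the upper bound you take a route that differs from the paper's. Both you and the paper obtain the lower bound $f^*(n,\Delta)\ge c\,\frac{\log\Delta}{\Delta}n$ directly from \Cref{thm:lower}. For the matching upper bound, the paper only sketches the argument, in a remark at the end of the proof of \Cref{thm:upper}: one reuses the binomial random bipartite graph $G\bigl(N,N,\tfrac{\Delta'}{N}\bigr)$ constructed there, applies Chernoff's bound to the degrees on \emph{both} sides, and discards the (few) high-degree vertices from each part before passing to an induced $n\times n$ subgraph, so the degree condition is enforced after the fact by pruning. You instead work in the permutation model, taking the union of $\Delta$ independent uniformly random perfect matchings, which is $\Delta$-regular by construction, and run the first-moment computation directly on that multigraph; as you note, whether a pair $(X,Y)$ is a bi-hole depends only on the underlying simple graph, whose maximum degree is still at most $\Delta$. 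This bypasses the pruning step entirely and yields a self-contained argument with all constants spelled out. You are also right that \Cref{thm:upper} alone is insufficient: since $f(n,\Delta)\le f^*(n,\Delta)$, an extremal example witnessing the upper bound on $f$ gives no control of degrees on the $B$-side. Both proofs are first-moment union-bound arguments; yours is arguably cleaner, while the paper's has the advantage of piggybacking on the random graph it has already analysed for \Cref{thm:upper}.
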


\noindent
Note that \Cref{thm:lower} does not cover the entire range of \Cref{thm:upper}: one wonders
if~$f(n,\Delta)$ is also~$\Theta(\frac{\log\Delta}{\Delta}n)$ when, for example, $n$ is in the interval $(\Delta/\log \Delta,\,\, \Delta \log\Delta)$. We leave this as an open problem, see \Cref{conclusions}.\\

Given \Cref{cor:main}, it is natural to consider the behavior of~$f(n, \Delta)$ when~$\Delta$ is small.
In this case, we have only the following modest results. The bounds are obtained as corollaries of
general bounds  that  become less and less precise as~$\Delta$ grows; see Table~\ref{table:small-delta} and \Cref{sec:small-delta} for more
explicit values.

\begin{table}\label{table:small-delta}
\centering
        \begin{tabular}{@{\extracolsep{4pt}}cccc}
            $\Delta$ & Lower bound & Upper bound \\
\hline
            $3$ & $0.34116$ & $0.4591$\\
            $4$ & $0.24716$ & $0.4212$\\
            $5$ & $0.18657$ & $0.3887$\\
            $6$ & $0.14516$ & $0.3621$\\
            $7$ & $0.11562$ & $0.3395$\\
            $8$ & $0.09384$ & $0.3201$\\
            $9$ & $0.07735$ & $0.3031$\\
            $10$ & $0.06459$ & $0.2882$\\
        \end{tabular}
    \caption{Explicit asymptotic lower and upper bounds on~$f(n,\Delta)$, divided by~$n$, obtained for small values of~$\Delta$, for~$n$
    large enough.
}
\end{table}

\begin{theorem}\label{thm:max-deg-small}
For any $\Delta \geq 2$ and any $n\in \mathbb{N}$ we have $f(n, \Delta)\geq \lfloor \frac{n
    -2}{\Delta}\rfloor $.  Moreover, for any  $n\in \mathbb{N}$ we have $f(n, 2) = \lceil n/2\rceil - 1$, and
    there exists~$n_0$ such that if~$n>n_0$, then $0.3411 n < f(n, 3)\le f^*(n,3) < 0.4591n$.
\end{theorem}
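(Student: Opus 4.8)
The plan is to establish Theorem~\ref{thm:max-deg-small} through three essentially independent arguments, one for each assertion, since the methods differ substantially. For the general bound $f(n,\Delta)\ge\lfloor\frac{n-2}{\Delta}\rfloor$, I would use a greedy/iterative construction: repeatedly pick a vertex $a\in A$ and a vertex $b\in B$ that are currently available and non-adjacent, add them to the bi-hole, and delete from the available sets the (at most~$\Delta$) neighbours of~$a$ in~$B$ together with~$b$ itself, and symmetrically delete~$a$ and possibly a bounded number of vertices of~$A$. The key bookkeeping point is that each step removes at most~$\Delta$ vertices from~$B$ and at most a controlled number from~$A$; after accounting for the asymmetry one checks that the process runs for at least $\lfloor\frac{n-2}{\Delta}\rfloor$ rounds. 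One must be slightly careful to always be able to find a non-adjacent pair among the survivors — here the degree bound on~$A$ forces each available $a$ to miss all but~$\Delta$ vertices of~$B$, so such a pair exists as long as the available side of~$B$ has more than~$\Delta$ vertices.

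For the exact value $f(n,2)=\lceil n/2\rceil-1$, I would prove the upper and lower bounds separately. The lower bound follows from the general bound together with an ad hoc improvement, or more cleanly from a direct argument: when $\Delta(A)\le2$, the bipartite graph restricted to~$A$-degrees decomposes nicely (paths and even cycles in a suitable auxiliary sense), allowing one to select roughly half of each side greedily while avoiding all edges. For the matching upper bound I would exhibit an extremal $n\times n$ graph with $\Delta\le2$ whose largest bi-hole has size exactly $\lceil n/2\rceil-1$: a natural candidate is a union of short paths/cycles (e.g.\ a Hamilton-type cycle or a perfect matching plus a near-perfect matching) arranged so that any~$k$ vertices on one side are collectively adjacent to more than $n-k$ vertices on the other side once $k\ge\lceil n/2\rceil$. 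Verifying that no larger bi-hole exists is a finite case check on the structure of degree-$\le2$ bipartite graphs.

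For the bounds $0.3411n<f(n,3)\le f^*(n,3)<0.4591n$, I expect to invoke the ``general bounds'' alluded to in the statement — presumably a lower bound proved via an entropy-compression or Lovász-Local-Lemma-flavoured random greedy analysis that is then optimized numerically at $\Delta=3$ to yield the constant $0.3411$, and an upper bound coming from an explicit pseudorandom or algebraically-defined $n\times n$ graph of maximum degree~$3$ (such as an incidence graph or a random $3$-regular bipartite graph) whose bi-hole number is bounded via a first-moment / union-bound computation giving the constant $0.4591$. The role of this part of the proof is mainly to specialize those general estimates and carry out the numerical optimization, together with checking that the constants indeed satisfy $0.3411<0.4591$ so that the interval is nonempty for all large~$n$.

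The main obstacle, I expect, is the upper bound construction for $\Delta=3$: one needs an $n\times n$ bipartite graph with every degree at most~$3$ (to bound $f^*$, not just $f$) that simultaneously has \emph{no} bi-hole larger than $0.4591n$, and proving the absence of large bi-holes requires controlling the expansion of \emph{every} subset of size about $0.46n$ on one side. A random $3$-regular bipartite graph should work via a union bound over the $\binom{n}{k}^2$ candidate bi-holes, but the second-moment/concentration details — ensuring the failure probability $\binom{n}{k}^2(1-k/n)^{3k}$ (or the appropriate configuration-model analogue) is $o(1)$ precisely when $k/n>0.4591$ — are where the sharp constant is pinned down, and getting the inequality to hold with a clean closed-form threshold is the delicate step.
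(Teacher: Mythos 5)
There is a genuine gap in the argument you sketch for the general lower bound $f(n,\Delta)\ge\lfloor(n-2)/\Delta\rfloor$. Your greedy deletes the (at most $\Delta$) neighbours of $a$ from the $B$-side, and ``possibly a bounded number of vertices of $A$'' on the other side. But the degree bound is \emph{one-sided}: a vertex $b\in B$ may have arbitrarily many neighbours in $A$, so once $b$ is placed into $Y$, every later $a$ must avoid $N(b)\cap A$, and this set can be as large as the whole remaining $A$-side. There is no ``bounded number'' to delete. If you instead try to repair the greedy by always choosing $b$ of minimum degree into the surviving $A'$, the best you can enforce is roughly $\Delta|A'|/|B'|$ deletions per step, which yields $\approx n/(\Delta+1)$, strictly weaker than the claimed $\lfloor(n-2)/\Delta\rfloor$. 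The paper avoids this by a different mechanism: it fixes a candidate set $X\subseteq B$ of size $\lfloor(n-2)/\Delta\rfloor$; if $|N(X)|\le n-|X|$ the bi-hole $(A\setminus N(X),X)$ is immediate, and otherwise it passes to the induced graph on $N(X)\cup(B\setminus X)$, where every vertex of $N(X)$ has lost at least one of its $\Delta$ edges, so the maximum degree drops to $\Delta-1$ and one inducts on $\Delta$. This one-step degree reduction, not a pair-by-pair greedy, is what the bound really hinges on.

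A related misconception appears in your $\Delta=2$ argument. Since only the $A$-side degrees are bounded, the bipartite graph is \emph{not} a union of paths and even cycles (a single $b\in B$ can be a hub of degree $n$). The paper forms an auxiliary (multi)graph on $B$ in which each degree-$2$ vertex of $A$ becomes an edge between its two $B$-neighbours; this auxiliary graph has $n$ vertices and $n$ edges but is otherwise unstructured, and the heart of the proof is a component-counting argument (dense vs.\ non-dense components) showing one can pick $\lceil n/2\rceil-1$ edges and $\lceil n/2\rceil-1$ disjoint vertices. Also note the circularity risk: the general bound $\lfloor(n-2)/\Delta\rfloor$ is proved by induction \emph{with $\Delta=2$ as base case}, so you cannot invoke it to get the $\Delta=2$ lower bound. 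Finally, for $\Delta=3$: the lower bound in the paper is not an LLL/entropy-compression argument but a clean recursion $f(n,\Delta)\ge f(\lfloor\xi n\rfloor,\Delta-1)$ with $1-\xi^\Delta\ge\xi$, obtained by picking a uniformly random $(1-\xi)n$-subset $S\subseteq B$ and noting that $N(S)$ has reduced degree into $B\setminus S$; specializing at $\Delta=3$ and plugging in the exact $f(\cdot,2)$ gives $\xi/2\approx 0.3411$. Your upper-bound sketch (configuration model for random $\Delta$-regular bipartite graphs, union bound over $\binom{n}{k}^2$ candidate bi-holes) is the right approach and matches the paper's, though the probability of a fixed bi-hole is $\bigl((\Delta n-\Delta k)!\bigr)^2/\bigl((\Delta n)!\,(\Delta n-2\Delta k)!\bigr)$ rather than the $G(n,n,p)$-style $(1-k/n)^{3k}$, and one must also cite a counting result (McKay--Wormald--Wysocka) to pass from the pairing to a simple regular graph with positive probability.
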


We also consider the other end of the regime for~$\Delta$, when $\Delta$ is close to $n$. In particular,
when $\Delta$ is linear in $n$, say $\Delta = n - cn$, it follows from \Cref{thm:upper} that $f(n, n -
cn) = O(\log n)$. Furthermore, it is not too difficult to show that $f(n, n - cn) = \Omega(\log n)$ (see
\Cref{prop:linear-delta} for details). When $\Delta$ is much larger (i.e., $\Delta = n - o(n)$), bounding
$f(n, \Delta)$ bears a strong connection to the Zarankiewicz problem, and we are able to obtain the
following result.  We formulate it in terms of a bound on the degrees guaranteeing  a bi-hole  of a
constant size $t$.  Let \[\Delta(t) \coloneqq \max\{ q\,:\,  f(n, q) =t\}.\]

\begin{theorem}\label{thm:large-random}
Let $t \ge 4$ be an integer. There is a positive
    constant~$C$ and an integer~$N_0$ such that if $n > N_0$, then 
 $ n - Cn^{1 - 1/t}   \leq \Delta(t)  \leq      n - Cn^{1 - \frac{2}{t+1}}$. 
 In addition there is an integer~$N_0$, such that if $n>N_0$, then
 $\Delta(2) = n - n^{1/2}(1+o(1)) $ and $\Delta(3) = n -  n^{2/3}(1+o(1))$. 
 \end{theorem}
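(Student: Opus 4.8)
The plan is to translate bi-hole existence into a Zarankiewicz-type counting statement and then apply known extremal bounds together with an explicit construction. Observe that $f(n,q) \ge t$ fails precisely when there is an $n\times n$ bipartite graph $G$ with $\Delta(a)\le q$ for all $a\in A$ that contains no bi-hole of size~$t$; equivalently, writing $H$ for the bipartite complement of $G$, every $t$ vertices in $A$ and $t$ vertices in $B$ span at least one edge of $H$, i.e.\ $H$ contains no $K_{t,t}$, while every vertex of $A$ has degree at least $n-q$ in $H$. Thus $\Delta(t) = n - 1 - \delta^*$, where $\delta^*$ is the largest $\delta$ for which there exists an $n\times n$ bipartite graph that is $K_{t,t}$-free and has minimum degree at least $\delta$ on the $A$-side (modulo the usual care about which inequalities are strict, and the fact that $f(n,q)$ is monotone in $q$). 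So the theorem is equivalent to pinning down the maximum one-sided minimum degree of a $K_{t,t}$-free bipartite graph.

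For the \emph{upper} bound on $\Delta(t)$ (i.e.\ the statement that $\delta^*$ cannot be too large), I would use the Kővári–Sós–Turán bound: a $K_{t,t}$-free $n\times n$ bipartite graph has at most $c_t\, n^{2-1/t}$ edges, hence average (and so minimum, after a cleaning/deletion argument) $A$-degree $O(n^{1-1/t})$. A more careful double-counting of $t$-subsets of $B$ dominated by common neighbourhoods — counting pairs $(a, S)$ with $S\in\binom{N(a)}{t}$ and using convexity of $\binom{x}{t}$ — gives that if every $a\in A$ has degree $\ge \delta$ then $n\binom{\delta}{t}\le (t-1)\binom{n}{t}$, which yields $\delta \le n^{1-2/(t+1)}(1+o(1))$ type savings for the better exponent $1-\frac{2}{t+1}$ claimed; the factor $(t-1)$ rather than $t-1$ choose something is where one squeezes the improved exponent. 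For the \emph{lower} bound on $\Delta(t)$ (a matching construction), I would take a random bipartite graph where each edge of $H$ is present independently with probability $p = \Theta(n^{-1/t})$: the expected number of $K_{t,t}$'s is $\binom{n}{t}^2 p^{t^2}$, which is $o(n)$ for $p$ slightly below $n^{-2/t^2}\cdot$(something)… more robustly, delete one edge from each $K_{t,t}$ and from each low-degree vertex, then a concentration argument (Chernoff on the degrees, Markov on the count of copies) leaves an $n\times n$ subgraph with min $A$-degree $\Omega(n^{1-1/t})$ and no $K_{t,t}$. This gives the two-sided bound $n - Cn^{1-1/t}\le \Delta(t)\le n - Cn^{1-2/(t+1)}$.

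For the sharp cases $t=2$ and $t=3$, the relevant Zarankiewicz numbers are essentially known: a $C_4$-free ($=K_{2,2}$-free) bipartite graph on $n+n$ vertices has at most $\frac12(1+\sqrt{4n-3})\,n$ edges (Reiman/Kővári–Sós–Turán, tight via incidence graphs of projective planes), giving min $A$-degree at most $(1+o(1))\sqrt{n}$, and the projective-plane incidence construction is regular of degree $(1+o(1))\sqrt n$, so $\Delta(2) = n - \sqrt n(1+o(1))$. For $t=3$, the bound $\mathrm{ex}(n,n; K_{3,3}) = (1+o(1))\, n^{5/3}$ with matching constructions from Brown's graphs (or the norm-graph / random-algebraic constructions) gives min $A$-degree $(1+o(1))n^{2/3}$, whence $\Delta(3) = n - n^{2/3}(1+o(1))$; I would need to check that the known lower-bound constructions can be made $A$-regular (or nearly so) at the right degree, possibly passing to an induced subgraph on a balanced vertex set.

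The main obstacle is the lower bound on $\Delta(t)$ for general $t\ge 4$ with the \emph{correct} constant and exponent after the cleaning step: one must ensure that deleting an edge from every copy of $K_{t,t}$ (whose count is concentrated only via a second-moment / Markov argument, since $K_{t,t}$ is not strictly balanced in the usual sense for $t\ge 2$) does not destroy the minimum-degree guarantee on the $A$-side. I expect to handle this by choosing $p$ conservatively so that the expected number of copies is $O(n^{1-\epsilon})$, using Markov to say that with probability $\ge 1/2$ there are $O(n)$ copies, then deleting those $O(n)$ edges affects only $O(n)$ vertices — but to keep \emph{all} $A$-vertices of high degree I instead delete the whole offending $A$-vertex, pass to a slightly smaller balanced bipartite graph, and re-pad to size $n$; reconciling "$n\times n$" with this deletion is the fiddly part. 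Alternatively, an explicit algebraic construction (norm graphs of Kollár–Rónyai–Szabó for $K_{t,(t-1)!+1}$, which suffices since it is $K_{t,t}$-free for $t\le (t-1)!+1$, true for $t\ge 3$) avoids randomness entirely and may give cleaner constants; I would likely present that for $t\ge 3$ and reserve the probabilistic argument as a fallback.
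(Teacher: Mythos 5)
Your translation of the problem into the parameter $\delta^*$ (the largest one-sided minimum degree of a $K_{t,t}$-free $n\times n$ bipartite graph, so that $\Delta(t)=n-1-\delta^*$) is correct, and the paper follows the same underlying reduction: one direction via K\H{o}v\'ari--S\'os--Tur\'an, the other via a random construction. But from that point on your two directions are swapped, and the exponents you announce are swapped with them, so the details do not assemble into a proof of the stated theorem.

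Because $\Delta(t)=n-1-\delta^*$, an \emph{upper} bound on $\Delta(t)$ is a \emph{lower} bound on $\delta^*$, i.e.\ a construction, and a \emph{lower} bound on $\Delta(t)$ is a nonexistence result. You label the KST double-counting step as proving the upper bound on $\Delta(t)$, ``i.e.\ the statement that $\delta^*$ cannot be too large'' --- but ``$\delta^*$ cannot be too large'' is precisely what yields the \emph{lower} bound $\Delta(t)\ge n-Cn^{1-1/t}$. (In fact the paper proves this even more simply: the degree hypothesis gives $e(G)\le n(n-Cn^{1-1/t})$, so the bipartite complement has $\ge Cn^{2-1/t}$ edges and hence a $K_{t,t}$ by the crude bound $z(n,t)\le Cn^{2-1/t}$; no per-vertex convexity is needed.) More seriously, the inequality $n\binom{\delta}{t}\le(t-1)\binom{n}{t}$ gives $\delta=O(n^{1-1/t})$, not $O(n^{1-2/(t+1)})$ as you claim; since $1-2/(t+1)<1-1/t$ for $t\ge2$, such a double-counting improvement would essentially settle the Zarankiewicz problem and cannot be had this way.

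For the actual upper bound $\Delta(t)\le n-Cn^{1-2/(t+1)}$, which is where a construction is needed, your choice $p=\Theta(n^{-1/t})$ fails: then $\mathbb{E}[\#K_{t,t}]\approx n^{2t}p^{t^2}=n^{t}$, while a vertex has expected degree $np\approx n^{1-1/t}$, so deleting one edge per copy can remove far more edges than a vertex has --- the ``fiddly part'' you flagged really is fatal at this parameter. The paper instead takes $p=cN^{-2/(t+1)}$ (with $N=2n$), which is exactly the threshold making the per-vertex copy count comparable to $np$; it then uses Markov to bound the total number of copies, an averaging argument to discard the $<n$ vertices of $A$ (out of $2n$) lying in too many copies, and only then deletes one edge per surviving $K_{t,t}$, leaving $\deg_H(v)\ge np/2-X(v)=\Omega\bigl(n^{1-2/(t+1)}\bigr)$. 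Finally, the norm-graph fallback is wrong for $t\ge4$: the Koll\'ar--R\'onyai--Szab\'o graphs are $K_{t,(t-1)!+1}$-free, and this implies $K_{t,t}$-freeness only if $t\ge(t-1)!+1$, i.e.\ $t\le 3$ --- the opposite of the inequality you wrote. Your treatment of $t=2,3$ via near-regular extremal $K_{2,2}$- and $K_{3,3}$-free graphs does match the paper's.
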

 
The rest of the paper is structured as follows. We describe connections between the function~$f(n,
\Delta)$,  classical bipartite Ramsey numbers, and the Erd\H{o}s-Hajnal conjecture in
Section~\ref{sec:related}.  We prove Theorems~\ref{thm:lower} and~\ref{thm:upper}  in \Cref{proof1} and
prove Theorem~\ref{thm:max-deg-small} and establish the values for Table~\ref{table:small-delta} in
\Cref{proof2}. We prove Theorem~\ref{thm:large-random} in \Cref{sec:large-delta}.  \Cref{conclusions}
provides concluding remarks and open questions.

\section{Related problems}\label{sec:related}

The function~$f(n, \Delta)$ is closely related to the bipartite version of the
Erd\H{o}s-Hajnal conjecture, bipartite Ramsey numbers, and the Zarankiewicz function.

\bigskip

A conjecture of
Erd\H{o}s and Hajnal~\cite{EH} asserts that for any graph~$H$ there is a
constant $\epsilon>0$ such that any $n$-vertex graph that does not contain~$H$
as an induced subgraph has either a clique or a coclique on at
least~$n^{\epsilon}$ vertices.  While this conjecture remains open, (see, for
example, the survey by Chudnovsky~\cite{Chu}) the bipartite version of the
problem has been considered.  For a bipartite graph~$G$, let~$\tilde{h}(G)$ be
the size of a largest \emph{homogeneous set} in~$G$ respecting sides, i.e.\ the largest
integer~$t$ such that~$G$ either contains a bi-hole of size~$t$ or a complete
bipartite subgraph with~$t$ vertices in each part. Given a bipartite graph~$H$, let~$\tih(n, H)$ be the smallest value
of~$\tilde{h}(G)$ over all $n\times n$ bipartite graphs~$G$ that do not
contain~$H$ as an induced subgraph respecting sides.  It is implicit from a
result of Erd\H{o}s, Hajnal, and Pach~\cite{EHP} that if~$H$ is a bipartite graph
with the smallest part of size~$k$, then $\tih(n, H)=\Omega(n^{1/k})$.  A standard
probabilistic argument shows that if~$H$ or its bipartite complement contains a
cycle, then $\tih(n, H) = O(n^{1-\epsilon})$ for a positive~$\epsilon$.
Axenovich, Tompkins, and Weber~\cite{ATW} proved that~$\tih(n,H)$ is linear
in~$n$ for all but four strongly acyclic bipartite graphs~$H$ (a bipartite graph~$H$ being
\emph{strongly acyclic} if neither~$H$ nor its bipartite complement contains a
cycle).  Kor\'andi, Pach, and Tomon~\cite{KPT} independently obtained similar
results in the context of matrices.  Note that~$f(n, \Delta)$ for~$\Delta$ sublinear in~$n$ corresponds to~$\tih(n, K_{1, \Delta +1})$.
Indeed, a bipartite graph not having a star with~$\Delta +1 $ leaves (respecting sizes) is a graph with
vertices in one part having degrees at most~$\Delta$.  In such a graph there are clearly no complete
bipartite graphs with~$\Delta+1$ vertices in each part, so the largest homogeneous set is a bi-hole,
as~$\Delta$ is sublinear in~$n$. Its size is thus determined by~$f(n, \Delta)$.

\bigskip

Furthermore, the parameter~$f(n, \Delta)$ bears a connection to bipartite Ramsey numbers.  If~$H_1$
and~$H_2$ are bipartite graphs, then the \emph{bipartite Ramsey number} $\brn(H_1, H_2)$ is the smallest
integer~$n$ such that any red-blue coloring of the edges of~$K_{n, n}$ produces a red copy of~$H_1$ or a
blue copy of~$H_2$ respecting sides. Thus, if~$f(n, \Delta)=k$, then $\brn(K_{1, \Delta+1}, K_{k,k})=n$.
For results on bipartite Ramsey numbers, see Caro and Rousseau~\cite{caro-rousseaux},
Thomason~\cite{thomason}, Hattingh and Henning~\cite{hattingh-henning},  Irving~\cite{irving}, and
Beineke and Schwenk~\cite{beineke-schwenk}.

\bigskip

Finally, considering the bipartite complement, determining $f(n, \Delta)$ is related to the Zarankiewicz
problem in bipartite graphs. We let $z(n, t)$ denote the maximum number of edges in a
subgraph of $K_{n, n}$ with no copy of $K_{t, t}$. Finding a large bi-hole in a bipartite graph is the
same as finding a large copy of $K_{t, t}$ in the bipartite complement, where the bipartite complement
has large minimum degree on one side (this is spelled out more carefully in \Cref{sec:large-delta}).
There is some literature on the Zarankiewicz problem for~$t$ large (see, for example,  Balbuena et
al.~\cites{BGMV, BGMV-1}, \v{C}ul\'ik~\cite{Cul}, F\"uredi and Simonovits~\cite{FS}, Griggs and
Ouyang~\cite{GO}, and Griggs, Simonovits, and Thomas~\cite{GST}). However, most of these results address
the case when~$t$ is close to $n/2$, or when the results do not lead to improvements on our bounds.
 

\section{Bounds on \texorpdfstring{$f(n, \Delta)$}{f(n,Delta)}}\label{sec:bounds}
In this section we establish upper and lower bounds on $f(n, \Delta)$ for 
various ranges of $\Delta$.
First, we establish the exact value of $f(n, 2)$ that is used in other results. 
Then, we treat the case when $\Delta$ is fixed but large. We then move on to consider when~$\Delta$ is
a small fixed constant, and finally, when $\Delta$ is large, close to $n$.

\begin{lemma}\label{lem:max-deg-2}
For every positive integer~$n$, we have $f(n, 2) = \lceil n/2\rceil - 1$.
\end{lemma}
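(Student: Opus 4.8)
The plan is to prove the lower and upper bounds separately. For the lower bound, $f(n,2) \ge \lceil n/2\rceil - 1$, I would take an arbitrary $n \times n$ bipartite graph $G = (A \cup B, E)$ with $\deg(a) \le 2$ for every $a \in A$, and exhibit a bi-hole of size $\lceil n/2 \rceil - 1$. The key structural observation is that the graph $G$ can be viewed, from the side $B$, as a multigraph $H$ on vertex set $B$ where each vertex $a \in A$ with $\deg_G(a) = 2$ becomes an edge joining its two neighbours, each $a$ with $\deg_G(a) = 1$ becomes a loop (or a pendant), and each $a$ with $\deg_G(a)=0$ is ignored. A bi-hole $(X,Y)$ with $Y \subseteq B$ corresponds to a set $Y$ of vertices of $H$ such that no edge of $H$ (i.e.\ no vertex of $A$) has both endpoints in $Y$, together with a set $X \subseteq A$ of vertices whose neighbourhoods avoid $Y$. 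So I want to find a large set $Y \subseteq B$ inducing few edges in $H$, and then argue that at least $|Y|$ vertices of $A$ have no neighbour in $Y$. Concretely: partition (or greedily select) to get $Y \subseteq B$ with $|Y| = \lceil n/2 \rceil - 1$ that is independent in the underlying simple graph of $H$ wherever possible; since $|A| = n$ and each $a \in A$ contributes at most $2$ to $\sum_{y \in Y}\deg_G(y)$... actually the cleaner route is a direct counting/extremal argument, which I sketch next.

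For the lower bound I would argue as follows. Since $e(G) = \sum_{a\in A}\deg_G(a) \le 2n$, consider choosing $Y \subseteq B$ uniformly at random among subsets of a fixed size $k$, or better, proceed deterministically. Let $k = \lceil n/2\rceil - 1$. I want $X \subseteq A$ with $|X| = k$ and no $G$-edges between $X$ and $Y$. A vertex $a \in A$ is "killed" by $Y$ if it has a neighbour in $Y$. Each $y \in Y$ kills at most $|N_G(y)|$ vertices of $A$, but that can be large, so instead I count pairs: the number of vertices of $A$ with a neighbour in $Y$ is at most $e(G[A,Y]) = \sum_{y\in Y}\deg_G(y)$, which I cannot control directly. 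The right move is to choose $Y$ to consist of low-degree vertices of $B$: order $B$ by $G$-degree and... hmm. The genuinely clean approach, and the one I expect the authors use, is the multigraph reformulation: in the multigraph $H$ on $B$ (vertices of $A$ as edges), since $H$ has at most $n$ edges on at most $n$ vertices, a theorem on independent sets / or a direct greedy argument yields a set $Y$ of size $\lceil n/2 \rceil$ inducing a subgraph of $H$ with at most one edge; removing one endpoint of that edge gives $|Y| = \lceil n/2\rceil - 1$ inducing no edge, i.e.\ no $a \in A$ has both neighbours in $Y$. Then every $a$ with both neighbours outside $Y$ survives; to get $k$ survivors in $A$ one observes that $a \in A$ is killed only if it has $\ge 1$ neighbour in $Y$, and a short averaging/counting argument over the choice of which half goes into $Y$ (there is freedom in the partition) yields that some valid $Y$ leaves at least $\lceil n/2\rceil - 1$ vertices of $A$ untouched.

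For the upper bound, $f(n,2) \le \lceil n/2\rceil - 1$, I would construct an explicit $n\times n$ bipartite graph $G$ with maximum degree $2$ on side $A$ (in fact $2$-regular) whose largest bi-hole has size exactly $\lceil n/2\rceil - 1$. The natural candidate is to let $G$ be a union of cycles covering all $2n$ vertices, e.g.\ a single Hamiltonian cycle of $K_{n,n}$ (so $A = \{a_1,\dots,a_n\}$, $B = \{b_1,\dots,b_n\}$, with $a_i$ adjacent to $b_i$ and $b_{i+1}$ indices mod $n$), which is $2$-regular. Then a bi-hole $(X,Y)$ corresponds to choosing $Y \subseteq B$ and $X \subseteq A$ with no cycle-edge between them; the complement-of-an-interval / matching structure of an even cycle $C_{2n}$ should force $|X| = |Y| \le \lceil n/2\rceil - 1$, via the fact that the "bipartite independence number" of $C_{2n}$ respecting its natural sides is $\lceil n/2 \rceil - 1$ (a short combinatorial argument: in a path/cycle the largest such pair is governed by covering constraints). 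I would verify this by a direct argument on the cycle, handling the parity of $n$.

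I expect the main obstacle to be the lower bound, specifically the bookkeeping that simultaneously controls both $|Y|$ (needs $Y$ to induce no vertex-of-$A$ with both neighbours inside) and $|X|$ (needs enough vertices of $A$ with \emph{no} neighbour in $Y$) — these two requirements pull in opposite directions, since making $Y$ larger makes it harder to keep many vertices of $A$ clear of it. Getting the constant exactly right, rather than just $\Omega(n)$, will require choosing $Y$ cleverly (e.g.\ as one side of a carefully chosen $2$-colouring of the multigraph $H$, so that both sides are independent and the two "halves" of $A$ split evenly), and checking the $\lceil n/2\rceil - 1$ bound against small cases and parity.
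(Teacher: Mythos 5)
Your upper bound (take a Hamiltonian cycle of $K_{n,n}$, i.e.\ $C_{2n}$) is exactly the paper's construction, and your reformulation of the lower bound as a question about the auxiliary multigraph $H$ on vertex set $B$ — with vertices of $A$ becoming edges of $H$, so that a bi-hole is a set of edges $E'$ and a set of vertices $V'$ of $H$, each of size $\lceil n/2\rceil-1$, with no edge of $E'$ touching $V'$ — is also the paper's setup. So the framing is right, but the heart of the lower bound is missing.

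The gap is in how you try to find such a pair $(E',V')$. You propose to first select $Y\subseteq B$ that is (nearly) independent in $H$ and then argue by averaging or a $2$-colouring that enough edges of $H$ avoid $Y$. Neither step survives scrutiny. First, $H$ has $n$ vertices and $n$ edges, so it need not have a large independent set at all (if $H$ is a disjoint union of $n/3$ triangles, the maximum independent set has size $n/3 < \lceil n/2\rceil - 1$); moreover, independence of $Y$ is not what the bi-hole requires — a vertex $a\in A$ is destroyed as soon as it has \emph{one} neighbour in $Y$, not two. Second, the counting direction is wrong: choosing $|Y| \approx n/2$ vertices (randomly or via a $2$-colouring) leaves in expectation only about $n/4$ edges of $H$ untouched, well short of $\lceil n/2\rceil - 1$; and a worst case such as a star shows that a colour class can cover all edges. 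The correct move — which you do not hit upon — is to choose the edge set $E'$ \emph{first}, not $Y$: take whole components of $H$ greedily, prioritizing components with at least as many edges as vertices (those containing a cycle), until $\lceil n/2\rceil - 1$ edges are collected (finishing with a connected piece of one more component if needed). A short count of vertices versus edges over the dense and non-dense components then shows that these edges touch at most $\lfloor n/2\rfloor + 1$ vertices, so the untouched part of $B$ already has size at least $\lceil n/2\rceil - 1$ and serves as $V'$. This ``edges-first, cyclic-components-first'' greedy is the key idea you are missing, and without it the argument does not close.
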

\begin{proof}
To see that $f(n, 2)<n/2$, simply consider an even cycle~$C_{2n}$ on~$2n$ vertices. It remains to
    establish the lower bound.  Let $H= (A\cup B, E)$ be a bipartite graph with $n$ vertices in each part
    and the degree of each vertex in~$A$ is at most~$2$. Note that we may assume without loss of
    generality that the degree of each vertex in~$A$ is exactly~$2$. Consider the auxiliary graph~$G$
    with vertex set~$B$, in which two vertices are adjacent if and only if
    they have a common neighbor in~$H$. Consequently, there is a natural bijection between the edges of~$G$ and~$A$,
    and thus~$G$ has $n$ vertices and~$n$ edges.
    We assert that~$G$ contains a set~$E'$ of edges and a set~$V'$ of vertices each of
    size~$\lceil n/2\rceil -1$, and such that no edge in~$E'$ has a vertex in~$V'$. Note, then, that this pair of sets corresponds to
    parts of a bi-hole in~$H$ of size $\lceil n/2 \rceil - 1$, thus proving that~$f(n,2)\ge \lceil n/2\rceil -1$. The rest of the proof is devoted to proving the above assertion.

    To this end, we consider the components of~$G$: a component~$C$ is \emph{dense} if
    $e(G[C]) \ge |C|$.  Let~$S_1, \dotsc, S_k$ be the components of $G$, enumerated such that $S_1, \dotsc, S_m$ are dense
    and the others are not.  Note that we must have at least one dense component, so~$m\ge1$, and it could be that all
    components are dense.  Let~$x$ be the number of components of $G$ that are not dense, that is, $x\coloneqq
    k-m\in\{0,\dotsc,k-1\}$.  Let~$v$ and~$e$ be the number of vertices and edges, respectively, in the union of all dense
    components of~$G$.  Then the total number of edges in non-dense components of~$G$ is~$n-e$ and the total number of vertices
    in these components is~$n-v$. In addition, the number of vertices in non-dense components is at least  the number of
    edges plus the number of components. Thus, $n-v\geq n-e+x$, so~$x\leq e-v$.

Let~$G'$ be a subgraph of~$G$ with precisely $\lceil n/2\rceil -1$ edges and consisting of $S_1, \dotsc,
    S_q$ and a connected subgraph of~$S_{q+1}$, for some $q\in\{0,\dotsc,k-1\}$.  In particular, if~$S_1$
    has at least $\lceil n/2\rceil -1$ edges, then $G'$ is a connected subgraph of $S_1$.  It suffices to
    show that~$G'$ has at most~$\lfloor n/2\rfloor+1$ vertices, since we can then choose a set~$V'$
    of~$\lceil n/2\rceil -1$ vertices in~$V(G)\setminus V(G')$, which along with~$E'\coloneqq E(G')$ will
    form the sought pair~$(V',E')$.  To this end, first notice that if~$G'$ has at most one non-dense
    component, then the number of vertices of~$G'$ is at most~$|E'|+1$, which is at most~$\lceil
    n/2\rceil\le\lfloor n/2\rfloor +1$, as desired.  Suppose now that~$G'$ has more than one non-dense
    component. It follows that~$G'$ contains all dense components of~$G$.  Let~$x'$ be the number of
    non-dense components of~$G'$. Then $x'\leq x$.  The number of edges in dense components of~$G'$ is
    $e$, thus the number of edges in non-dense components of $G'$ is $\lceil n/2\rceil -1-e$.  This
    implies that the number of vertices in non-dense components of~$G'$ is at most $(\lceil n/2\rceil
    -1-e)+x' \leq (\lceil n/2\rceil -1-e)+x$.  Adding the number~$v$ of vertices in dense components
    of~$G$ and the number of vertices in non-dense components of~$G'$, we see that the total number of
    vertices in~$G'$ is at most $ v + ( (\lceil n/2\rceil -1-e)+x) \leq \lceil n/2\rceil - 1\leq \lfloor
    n/2\rfloor +1$.   This concludes the proof.
\end{proof}

\subsection{Proof of Theorems \texorpdfstring{\ref{thm:lower} and~\ref{thm:upper}}{1.1 and 1.2}}\label{proof1}

The upper bound given in \Cref{thm:upper} comes from suitably modifying the random bipartite graph
$G\left(n, n, \frac{\Delta}{n}\right)$. The idea of the proof of the lower bound given in
\Cref{thm:lower} is as follows. Let~$G = (A \cup B, E)$ be an $n \times n$ bipartite graph with $\deg(x) \le
\Delta$ for every $x \in A$.  We choose an appropriate parameter $s$ and choose a subset $S$ of~$B$
uniformly at random from the set of all $s$-element subsets of $B$ and consider the set $T$ of vertices
in $A$ that have at least $\Delta - 2$ neighbors in $S$.  \Cref{lem:max-deg-2} can then be applied to the
bipartite graph induced on parts $(T,B\setminus S)$, as in this bipartite graph every vertex in~$T$ has
degree at most~$2$. Intuitively, the set $T$ should be ``large enough'' to guarantee a large bi-hole
in~$G$.  Floors and ceilings, when not relevant, are ignored in what follows. We start by establishing
the lower bound, that is \Cref{thm:lower}.

\begin{proof}[Proof of \Cref{thm:lower}]
Consider an arbitrary bipartite graph with parts~$A$ and~$B$ each of size~$n$ so that the degrees of
    vertices in~$A$ are at most~$\Delta$.  Choose a subset~$S$ of~$B$ of size~$(1-2x)n-2$ randomly and
    uniformly among all such subsets, where $x \coloneqq \frac{1}{2} \frac{\log \Delta}\Delta$.  We
    assume that $n \ge N_0\ge 5\Delta\log\Delta$ and~$\Delta\ge\Delta_0$ chosen large enough to satisfy the
    last inequality in the proof.  Let~$X$ be the random variable counting the number of vertices in $A$
    with at least $\Delta - 2$ neighbors in $S$.  Then \[\mathbb{E}X \ge n\cdot h(x, n, \Delta),\] where 
    \[h(x, n, \Delta) = \binom{\Delta}{\Delta-2} \binom{n-\Delta}{(1-2x)n -  \Delta}\binom{n}{(1-2x)n-2}^{-1}.\]

Observe that if $\mathbb{E}X \ge 2xn+2$, then there is a set $A'$ of at least $2xn+2$ vertices in $A$,
    each sending at most $2$ edges to $B \setminus S$. Since $|B\setminus S|=2xn+2$, \Cref{lem:max-deg-2}
    implies that there is a bi-hole between $A'$ and $B\setminus S$ of size at least $xn$. So, it is
    sufficient to prove that $h(x, n, \Delta) \geq 2x+2/n$.  Let us now verify this inequality.
    
    Recall that $x = \frac{1}{2} \frac{\log \Delta}\Delta$.   Let 
    $\alpha=1 - 2x$, so $\alpha =  1 - \frac{\log \Delta}\Delta = \frac{\Delta- \log \Delta}{\Delta} \in (0,1)$.  Note that~$\alpha n \ge\Delta$ since~$n\ge5\Delta\log\Delta$. Let $\beta =\frac{1}{\alpha} - 1$.  Then $\beta = \frac{\log \Delta}{\Delta - \log \Delta}$. We have
\begin{align}
	h(x, n, \Delta) &= \frac{\binom{\Delta}{\Delta-2} \binom{n-\Delta}{(1-2x)n - \Delta } }{ \binom{n}{(1-2x)n-2}} \notag \\
          &= \binom{\Delta}{2}\prod_{j = 2}^{\Delta - 1} \left(\frac{\alpha n - j}{n - j}\right) \cdot \left[\frac{(2xn+2)(2xn + 1)}{n(n - 1)}\right] \notag\\
	&> \binom{\Delta}{2}\prod_{j = 2}^{\Delta - 1} \left(\frac{\alpha n - j}{n - j}\right) \cdot (2x)^2  \notag \\
	&=   \binom{\Delta}{2}(2x)^2\alpha^{\Delta - 2}\prod_{j = 2}^{\Delta - 1} \left( 1 - \frac{\beta j}{n - j}\right) \notag \\
	&\geq  \binom{\Delta}{2}(2x)^2\alpha^{\Delta - 2}\left(1 - \frac{\beta \Delta}{n - \Delta}\right)^{\Delta - 2} \label{increase}  \\
	&\ge \binom{\Delta}{2}(2x)^2\alpha^\Delta\left(1 - \frac{\beta\Delta}{n-\Delta}\right)^\Delta \label{increase'},
\end{align}
where~\eqref{increase} holds because the function $j \mapsto \frac{\beta j}{ n - j}$ is increasing, as
    $\beta > 0$. Now, expressing~$\beta$ in terms of~$\Delta$, we note that
\[
    \frac{\beta \Delta}{n  - \Delta} \leq \frac{ \Delta \log
    \Delta/(\Delta - \log \Delta) }{5\Delta\log\Delta - \Delta}  = \frac{\log \Delta}{(5\log\Delta - 1)(\Delta- \log \Delta)} \leq 1,
\]
and therefore Bernoulli's inequality can be applied to~\eqref{increase'}. It follows that
\begin{align}
h(x, n, \Delta)	&> \binom{\Delta}{2}(2x)^2 (1 - 2x)^{\Delta}\left( 1 - \frac{\Delta^2\log\Delta}{(\Delta - \log\Delta)(n - \Delta)}\right)  \notag\\
	&\geq \binom{\Delta}{2}(2x)^2 (1 - 2x)^{\Delta} \left( 1 - \frac{4\Delta\log\Delta}{n}\right) \label{e7}\\
	&\geq \binom{\Delta}{2}(2x)^2 (1 - 2x)^{\Delta} \frac{1}{5} \label{e8},
\end{align}	
where~\eqref{e7} follows since $\frac{1}{n - \Delta} < \frac{2}{n}$ and $\log \Delta < \Delta/2$,
    and~\eqref{e8} holds since $n > 5\Delta\log\Delta$. Now, note that $(1 - 2x)^\Delta = \left(1 -
    \frac{\log\Delta}{\Delta}\right)^\Delta \ge \frac{1}{2}e^{-\frac{\log\Delta}{\Delta}\cdot \Delta} =
    \frac{1}{2\Delta}$. Thus, from~\eqref{e8} we obtain
\[
    h(x, n, \Delta) > \binom{\Delta}{2}(2x)^2\frac{1}{10\Delta} = (2x) \frac{(\Delta - 1)\log \Delta}{20\Delta}.
\]
Finally, to bound the right-hand side of the above inequality from below, observe that
\[
(2x)\frac{  (\Delta-1) \log \Delta }{ 20 \Delta} \ge (2x)\left(1 + \frac{1}{40}\log \Delta\right) = 2x + \frac{\log^2\Delta}{40\Delta} \ge 2x + \frac{2}{5\Delta\log\Delta} \ge 2x + \frac{2}{n},
\]
where these inequalities hold for sufficiently large $\Delta$. Accordingly, $h(x, n, \Delta) > 2x + \frac{2}{n}$, which concludes the proof of \Cref{thm:lower}.
\end{proof}

To prove \Cref{thm:upper} we shall need to use Chernoff's bound.  Specifically, we use the following
version (see~\cite{random-graphs}*{Corollary~21.7, p.401}, for example).

\begin{lemma}[Chernoff's bounds]\label{lem:chernoff}
  Let $X$ be a random variable with distribution $\bin(N, p)$ and $\varepsilon \in (0, 1)$. Then
    \begin{align}
        \mP[X \ge (1 + \varepsilon)\mE X] & \le
        \exp\left(-\frac{\varepsilon^2}{3}\mathbb{E}X\right)\quad\text{and}\label{cher-up}\\
        \mP[X \le (1 - \varepsilon)\mE X] & \le 
        \exp\left(-\frac{\varepsilon^2}{2}\mE X\right).\label{cher-low}
    \end{align}
\end{lemma}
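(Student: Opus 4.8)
The plan is to prove both inequalities by the classical exponential-moment method and then reduce everything to two elementary one-variable estimates. Write $\mu \coloneqq \mE X = Np$. The first step is the observation that for any real $t$,
\[
\mE e^{tX} = \bigl(1 - p + p e^{t}\bigr)^{N} = \bigl(1 + p(e^{t} - 1)\bigr)^{N} \le \exp\bigl(Np(e^{t}-1)\bigr) = \exp\bigl(\mu(e^{t}-1)\bigr),
\]
using independence of the Bernoulli summands and $1 + y \le e^{y}$.

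For \eqref{cher-up} I would fix $t > 0$ and apply Markov's inequality to the nonnegative variable $e^{tX}$, getting
\[
\mP[X \ge (1+\varepsilon)\mu] = \mP\bigl[e^{tX} \ge e^{t(1+\varepsilon)\mu}\bigr] \le e^{-t(1+\varepsilon)\mu}\,\mE e^{tX} \le \exp\!\bigl(\mu\bigl(e^{t} - 1 - t(1+\varepsilon)\bigr)\bigr),
\]
and then minimize the exponent over $t > 0$; the minimizer is $t = \log(1+\varepsilon)$, which yields $\mP[X \ge (1+\varepsilon)\mu] \le \bigl(e^{\varepsilon}/(1+\varepsilon)^{1+\varepsilon}\bigr)^{\mu}$. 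Symmetrically, for \eqref{cher-low} I would apply Markov's inequality to $e^{-tX}$ with $t > 0$, obtaining $\mP[X \le (1-\varepsilon)\mu] \le \exp\!\bigl(\mu\bigl(e^{-t} - 1 + t(1-\varepsilon)\bigr)\bigr)$, and the optimal choice $t = -\log(1-\varepsilon) > 0$ gives $\mP[X \le (1-\varepsilon)\mu] \le \bigl(e^{-\varepsilon}/(1-\varepsilon)^{1-\varepsilon}\bigr)^{\mu}$.

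It then remains to check the two numerical inequalities
\[
\frac{e^{\varepsilon}}{(1+\varepsilon)^{1+\varepsilon}} \le e^{-\varepsilon^{2}/3}
\qquad\text{and}\qquad
\frac{e^{-\varepsilon}}{(1-\varepsilon)^{1-\varepsilon}} \le e^{-\varepsilon^{2}/2}
\qquad\text{for }\ \varepsilon \in (0,1),
\]
after which raising both sides to the power $\mu \ge 0$ finishes the proof. Taking logarithms, these become $g(\varepsilon) \coloneqq (1+\varepsilon)\log(1+\varepsilon) - \varepsilon - \tfrac13\varepsilon^{2} \ge 0$ and $h(\varepsilon) \coloneqq (1-\varepsilon)\log(1-\varepsilon) + \varepsilon - \tfrac12\varepsilon^{2} \ge 0$. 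For $h$ one has $h(0) = h'(0) = 0$ and $h''(\varepsilon) = \varepsilon/(1-\varepsilon) \ge 0$, so $h' \ge 0$ and hence $h \ge 0$ on $[0,1)$. For $g$ one has $g(0) = g'(0) = 0$ with $g'(\varepsilon) = \log(1+\varepsilon) - \tfrac23\varepsilon$ and $g''(\varepsilon) = \tfrac1{1+\varepsilon} - \tfrac23$; thus $g'$ increases on $[0,\tfrac12]$ and decreases on $[\tfrac12,1]$, and since $g'(0) = 0$ and $g'(1) = \log 2 - \tfrac23 > 0$, it stays nonnegative on all of $[0,1]$, whence $g \ge 0$. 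The main (and essentially only) obstacle is this last step: unlike $h'$, the function $g'$ is not monotone on $[0,1]$, so one cannot conclude $g \ge 0$ from convexity alone and must instead exploit that $g'$ is still positive at the right endpoint — the constant $\tfrac13$ in \eqref{cher-up} is exactly what makes this work, while a smaller denominator would fail near $\varepsilon = 1$.
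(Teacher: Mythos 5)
Your proposal is correct and complete: the moment-generating-function bound, Markov's inequality with the optimal choices $t=\log(1+\varepsilon)$ and $t=-\log(1-\varepsilon)$ yielding the bounds $\bigl(e^{\varepsilon}/(1+\varepsilon)^{1+\varepsilon}\bigr)^{\mu}$ and $\bigl(e^{-\varepsilon}/(1-\varepsilon)^{1-\varepsilon}\bigr)^{\mu}$, and the two calculus verifications all check out; in particular your treatment of $g'(\varepsilon)=\log(1+\varepsilon)-\tfrac23\varepsilon$ (increasing on $[0,\tfrac12]$, decreasing on $[\tfrac12,1]$, and nonnegative at both endpoints) correctly handles the one genuinely delicate point, and your remark that replacing $\tfrac13$ by $\tfrac12$ would fail near $\varepsilon=1$ is accurate. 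There is, however, nothing in the paper to compare against: the authors do not prove this lemma but simply quote it from the book of Janson, \L{}uczak and Ruci\'nski (the cited Corollary~21.7), so your argument is the standard Chernoff exponential-moment derivation underlying that reference. What your write-up buys is self-containedness at the cost of a page of routine estimates; what the paper's citation buys is brevity for a result that is entirely standard.
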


\begin{proof}[Proof of \Cref{thm:upper}]
Let $\Delta\ge27$.  Suppose  that~$n\ge\tfrac{3\Delta}{20\log(\Delta/2)}$.  Set $N\coloneqq2n$ and $\Delta' \coloneqq  \Delta/2$,
    so in particular~$\Delta'\ge13.5$.  We consider first $H \coloneqq G\left(N, N,
    \frac{\Delta'}{N}\right)$, that is, $H$ is a random bipartite graph with parts $A$ and~$B$ each of
    size $N$, where each edge $ab$ with $a \in A$ and $b \in B$ is chosen independently and uniformly at
    random with probability $\Delta'/N$. We first establish that the random
    graph~$H$ contains no ``large'' bi-holes with fairly large probability.  In the following, for subsets~$X \subseteq A$ and~$Y \subseteq
    B$, let~$e(X, Y)$ denote the number of edges with one endpoint in~$X$ and the other in~$Y$.

\begin{as}\label{claim:no-large-holes}
    With probability at least~$0.75$, any two subsets $X \subset A$ and $Y \subset B$ with $|X| = |Y| =
\frac{2N\log \Delta'}{\Delta'}$ satisfy $e(X, Y) > 0$.
\end{as}
\begin{proof}
Set $m \coloneqq \frac{2N\log \Delta'}{\Delta'}$ and note that
    $m$ is therefore at least~$\frac65$.
    Suppose that $X \subset A$ and $Y \subset B$ both
    have size~$m$. Then
     \[ \mP(e(X, Y)=0)=  \left(1 - \frac{\Delta'}{N}\right)^{m^2}.\]  
     Let $p$ be the probability that there is a pair~$(X, Y)$, with $X \subset A$ and $Y \subset B$, $|X|=|Y|=m$, such that $e(X,Y)=0$.
    Forming a union bound over all possible pairs of sets of size $m$, we have \[
	p\leq \binom{N}{m}^2\left(1 - \frac{\Delta'}{N}\right)^{m^2}\leq \left(\frac{Ne}{m}\right)^{2m} e^{-\frac{\Delta'm^2}{N}} = \left(\frac{\Delta' e}{2\log
    \Delta'} e^{-\log \Delta'}\right)^{2m} = {\left(\frac{e}{2\log \Delta'}\right)}^{2m}\leq 0.25.  \]
Here, we used  the standard estimates $\binom{t}{k} \le \left(\frac{t\cdot e}{k}\right)^k$,  $1 - x \le e^{-x}$, 
the fact  that $(e/2\log \Delta') < 0.53$ because $\Delta' \ge 13.5$, as well as inequality $2m\ge\frac{12}{5}$.
 This establishes~\eqref{claim:no-large-holes}. 
\aqed
\end{proof}

Next we show that, with probability sufficiently large for our purposes, at least half of the vertices of
    $H$ have degree at most $\Delta'+ \sqrt{3\Delta'}$. 

\begin{as}\label{claim:no-large-degrees}
With probability greater than~$0.25$, the number of vertices~$v\in A$ with more than~$\Delta' +  \sqrt{3\Delta'}$ neighbors
in~$B$ is at most~$N/2$.
\end{as}

\begin{proof}
We use standard concentration inequalities to show that the degree of every vertex in $A$ is
approximately~$\Delta'$.  For each vertex $v \in A$, let $X_v$ be the degree of $v$ in $H$. Noting that
    $\mathbb{E}X_v = \Delta'$, we apply \eqref{cher-up} from \Cref{lem:chernoff} with 
    $\varepsilon \coloneqq \sqrt{3/\Delta'} < 1$ to obtain \[
    \mathbb{P}[X_v \ge (1 + \varepsilon)\Delta'] \le \exp\left(-\frac{{(\sqrt{3/\Delta'})}^2}{3}\Delta'\right)
    = e^{-1}.  \]
Letting $X$ be the random variable counting those vertices $v \in A$ with $X_v \ge (1 +
\varepsilon)\Delta'$, we observe that $\mathbb{E}X \le e^{-1}N$, and therefore, by Markov's inequality, we
        deduce that $\mathbb{P}[X \ge N/2] \leq \frac2e < 0.75$, thereby establishing~\eqref{claim:no-large-degrees}.  \aqed
\end{proof}

It follows from~\eqref{claim:no-large-holes} and~\eqref{claim:no-large-degrees} that with positive
    probability, $H$ has no large bi-holes, and at least half of the vertices in $A$ have degree at most
    $\Delta' + \sqrt{3\Delta'}\leq 2\Delta' \leq \Delta$.  We now fix
    such a graph~$H$.  We can thus choose a subset~$A'$ of~$A$ of size~$N/2=n$ such that every vertex
    in~$A'$ has degree at most~$\Delta$ in~$H$. Now, arbitrarily choosing a subset~$B'$ of~$B$ of
    size~$n$, we know that the subgraph~$H'$ of~$H$ induced by~$A'\cup B'$ is an $n\times n$ bipartite
    graph with maximum degree~$\Delta$ on one side and without bi-hole of size larger than \[
	2(2n)\left(\frac{\log(\Delta/2)}{(\Delta/2)}\right) 
        < 8n\left(\frac{\log \Delta}{\Delta}\right).  \]

 In order to obtain a lower bound on $f^*(n, \Delta)$, all that is required is to make the
example obtained above have bounded maximum degree. Thus, it suffices to apply Chernoff's inequality to
all vertices (instead of just the vertices in $A$). We may have to remove more vertices after doing this,
but the loss will only be reflected in the constant.
This completes the proof of \Cref{thm:upper}.
\end{proof}


\subsection{Bounding \texorpdfstring{$f(n, \Delta)$}{f(n,Delta)} for small \texorpdfstring{$\Delta$}{Delta}}\label{sec:small-delta}\label{proof2}

We have already established a part of \Cref{thm:max-deg-small} via \Cref{lem:max-deg-2}, namely,
we showed that $f(n, 2) = \lceil n/2\rceil - 1$. Our aim in this section is to investigate the behavior
of $f(n, 3)$ more closely, and to complete the proof of \Cref{thm:max-deg-small}. First, let us note the
following lower bound on $f(n, \Delta)$, valid for all integers~$n$ and~$\Delta$ greater than~$1$.

\begin{proposition}\label{prop:general-delta}
    If~$n$ and~$\Delta$ are two integers greater than~$1$, then
    $f(n,\Delta) \ge \floor{\frac{n - 2}{\Delta}}$.
\end{proposition}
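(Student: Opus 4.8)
The plan is to reduce the statement to the case $\Delta=2$, which is \Cref{lem:max-deg-2}. Set $k\coloneqq\floor{\frac{n-2}{\Delta}}$; if $k\le 0$ there is nothing to prove, so assume $k\ge 1$, whence $n\ge\Delta+2$. For $\Delta=2$ we are already done, since $\floor{(n-2)/2}\le\ceil{n/2}-1=f(n,2)$ by \Cref{lem:max-deg-2}.

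So assume $\Delta\ge 3$ and let $G=(A\cup B,E)$ be an $n\times n$ bipartite graph with $\deg(a)\le\Delta$ for every $a\in A$. Put $m\coloneqq 2k+1$; then $m\le\frac{2(n-2)}{\Delta}+1<n$, and by \Cref{lem:max-deg-2} any $m\times m$ bipartite graph in which every vertex on one side has degree at most~$2$ contains a bi-hole of size $\ceil{m/2}-1=k$. Hence it suffices to exhibit a set $S\subseteq B$ with $|S|=m$ for which at least $m$ vertices $a\in A$ satisfy $|N(a)\cap S|\le 2$: taking $m$ such vertices as one side and $S$ as the other gives an induced $m\times m$ subgraph of the required form, and a bi-hole in it is a bi-hole in~$G$. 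I would obtain $S$ by the \emph{first-moment method}: choose $S$ uniformly at random among the $m$-element subsets of~$B$. For $a\in A$ the variable $|N(a)\cap S|$ is hypergeometric with mean $\deg(a)\cdot\frac mn\le\Delta\cdot\frac mn$, which, since $m\Delta\le 2(n-2)+\Delta$, is at most $2+\frac{\Delta-4}{n}$. Writing $q$ for the minimum over $a\in A$ of $\mP\!\left(|N(a)\cap S|\le 2\right)$, we have $\mE\bigl[\,\#\{a\in A:|N(a)\cap S|\le 2\}\,\bigr]\ge qn$, so it is enough to prove $q\ge\frac mn$; then some outcome of the random choice leaves at least $m$ good vertices, and we are done.

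The crux is this inequality $q\ge m/n$, equivalently $\mP(Z\ge 3)\le 1-\frac mn$, where $Z$ denotes the hypergeometric variable attached to a vertex of degree exactly~$\Delta$ (smaller degrees only help). For $\Delta=3$ we have $Z\le 3$, so $\mP(Z\ge 3)=\mP(Z=3)=\frac{m(m-1)(m-2)}{n(n-1)(n-2)}$; with $m=2k+1$ and $n\le 3k+4$ the claim reduces to $(2k+1)(2k)(2k-1)\le(k+1)(3k+1)(3k)$, i.e.\ $k^{3}+12k^{2}+5k\ge 0$, true for every $k\ge 1$. For $\Delta\ge 4$ one has $\frac mn\le\frac12$ (since $4k\le\Delta k\le n-2$, hence $m=2k+1\le\frac n2$), while the mean $\Delta\cdot\frac mn\le 2+\frac{\Delta-4}{n}$ of $Z$ is at most $\tfrac52$ once $k\ge 2$ (then $n\ge 2\Delta+2$); a short tail estimate for the hypergeometric distribution then gives $\mP(Z\ge 3)<\frac12\le 1-\frac mn$, and the residual range $k\le 1$ is trivial. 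The only delicate point I foresee is making this last tail bound uniform in $n$ and $\Delta$ — the mean of $Z$ can slip above~$2$ when $\Delta$ is comparable to~$n$, but in that regime $m/n$ is small, so $1-m/n$ is bounded well away from the tail, and standard binomial/hypergeometric tail inequalities should close the gap.
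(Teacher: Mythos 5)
Your strategy is genuinely different from the paper's: you attempt a single probabilistic reduction directly to the $\Delta=2$ case (choose a random $m$-set $S\subseteq B$ with $m=2k+1$, keep the vertices of $A$ with at most two neighbours in $S$, then invoke \Cref{lem:max-deg-2}), whereas the paper runs a short deterministic induction on $\Delta$. Your treatment of $\Delta=3$ is fine: the inequality $\mathbb{P}(Z=3)\le 1-m/n$ boils down, at the extremal value $n=3k+2$, to $(2k+1)(2k)(2k-1)\le(k+1)(3k+1)(3k)$, which indeed holds.

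However, for $\Delta\ge 4$ there is a real gap. You need $\mathbb{P}(Z\ge 3)\le 1-m/n$ for a hypergeometric variable $Z$ with mean $\Delta m/n$, and you reduce this to ``mean $\le 5/2$ implies $\mathbb{P}(Z\ge 3)<1/2$'' while calling this a ``short tail estimate.'' No standard concentration bound yields this. Markov gives only $\mathbb{P}(Z\ge 3)\le \tfrac{5}{6}$; Chebyshev/Chernoff/Hoeffding address deviations of order $\Omega(\mathbb{E}Z)$ and are useless at the threshold $3=\mathbb{E}Z+\tfrac12$. Worse, the inequality you need is essentially tight: with $k=2$, $n=2\Delta+2$, $m=5$, one has $\mathbb{E}Z=\tfrac{5\Delta}{2\Delta+2}\to\tfrac52$ and $\mathbb{P}(Z\ge 3)\to\tfrac12$ as $\Delta\to\infty$ (the hypergeometric here approaches $\bin(5,\tfrac12)$). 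So the bound is true only by a hair, and certainly not by any off-the-shelf inequality; it would require a bespoke argument (for instance, the stochastic-dominance symmetry $Z\le_{st}5-Z$ when $n-\Delta\ge\Delta$, together with a separate check of the remaining cases). You flag this yourself, but ``standard binomial/hypergeometric tail inequalities should close the gap'' is not a proof, and at this tightness it cannot be waved through.

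By contrast, the paper's proof is elementary and avoids all estimates: take any $X\subseteq B$ with $|X|=\lfloor(n-2)/\Delta\rfloor$. If $|N(X)|\le n-|X|$ then $(X,A\setminus N(X))$ is already the required bi-hole; otherwise consider $G'\coloneqq G[N(X)\cup(B\setminus X)]$, an almost-$n\times n$ bipartite graph in which every vertex of $N(X)$ has degree at most $\Delta-1$ (one neighbour lies in $X$), and the induction hypothesis applied to $G'$ returns a bi-hole of size $\lfloor(n-2)/\Delta\rfloor$ because $\frac{1}{\Delta-1}\bigl(n-\frac{n-2}{\Delta}-2\bigr)=\frac{n-2}{\Delta}$. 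You would be better served adopting that induction, or at least replacing the unproved tail estimate with a concrete hypergeometric computation as you did for $\Delta=3$.
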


\begin{proof}
We shall prove this by induction on~$\Delta$ with the base case $\Delta=2$ following from
    \Cref{lem:max-deg-2}. Let~$H= (A\cup B, E)$
    be a bipartite graph with~$n$ vertices in each part and such that the degree of each vertex in
    part~$A$ is equal to $\Delta$, $\Delta\geq 3$.

Consider a set~$X$ of $\lfloor(n - 2)/ \Delta\rfloor$ vertices in $B$. If $|N(X)|\leq n-
    \lfloor(n-2)/\Delta\rfloor$, then~$X$ and~$A\setminus N(X)$ form a bi-hole with at least
    $\lfloor(n-2)/\Delta\rfloor$ vertices in each part.  Otherwise, $|N(X)| > n - \lfloor(n -
    2)/\Delta\rfloor$.  Let~$G' \coloneqq G[N(X) \cup (B\setminus X)]$. Then each of the parts of~$G'$ has
    size at least~$n- \lfloor{(n - 2)/\Delta}\rfloor \ge n - (n-2)/\Delta$ and the maximum degree of
    vertices of~$N(X)$ in~$G' $ is at most~$\Delta -1$. Thus, by induction $G'$ has a bi-hole of size at
    least $\left\lfloor\frac{1}{\Delta-1} (n - (n-2)/\Delta - 2)\right\rfloor = \left\lfloor\frac{n -
    2}{\Delta}\right\rfloor$.
\end{proof}

It follows from the above proposition that $f(n, 3) \ge \floor{(n - 2)/3}$. However, this lower
bound can be improved slightly by choosing a random subset of $B$ and considering the neighborhood of
this set in $A$, similarly as in the proof of the lower bound in \Cref{thm:lower}.

\begin{lemma}\label{lem:recursion}
If~$n$ and~$\Delta$ are two integers greater than~$1$, then
$f(n, \Delta) \geq f(\lfloor\xi n\rfloor, \Delta-1)$, where 
$\xi=\xi(\Delta)$ is a solution to the inequality $1- \xi^\Delta \ge \xi$.
\end{lemma}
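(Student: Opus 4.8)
The plan is to mimic the probabilistic argument used in the proof of \Cref{thm:lower}, but tuned so that we only need to drop the degree by one rather than all the way down to $2$. Let $H=(A\cup B,E)$ be an $n\times n$ bipartite graph in which every vertex of $A$ has degree exactly $\Delta$ (we may assume equality by adding edges, which only makes bi-holes harder to find). Fix a real number $\xi=\xi(\Delta)\in(0,1)$ satisfying $1-\xi^{\Delta}\ge\xi$, set $m\coloneqq \lfloor \xi n\rfloor$, and choose a set $S\subseteq B$ of size $n-m$ uniformly at random among all such subsets. Let $T$ be the set of vertices $a\in A$ all of whose $\Delta$ neighbours lie in $S$ (equivalently, $a$ sends no edge to $B\setminus S$). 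The point is that for each fixed $a\in A$, the probability that all $\Delta$ neighbours of $a$ fall in $S$ is $\binom{n-\Delta}{\,n-m-\Delta\,}\binom{n}{\,n-m\,}^{-1}=\prod_{j=0}^{\Delta-1}\frac{n-m-j}{n-j}$, which is at least $\bigl(\frac{n-m}{n}\bigr)^{\Delta}\ge(1-\xi)^{\Delta}$ for the relevant range (using $m\le \xi n$ and that decreasing both numerator and denominator by the same amount only decreases the ratio). Hence $\mathbb{E}|T|\ge (1-\xi)^{\Delta}n$.

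First I would record the key inequality: by the defining property of $\xi$ we have $(1-\xi)^{\Delta}\ge 1-\xi^{\Delta}\cdot$(something)$\,$—more precisely one wants $(1-\xi)^{\Delta}\ge \xi$ or at least $\mathbb{E}|T|\ge m$. Here a small amount of care is needed: the inequality actually imposed is $1-\xi^{\Delta}\ge\xi$, and one checks $(1-\xi)^{\Delta}\ge 1-\Delta\xi$ is too weak, so instead the right reading is presumably that $\xi$ is chosen so that $(1-\xi)^{\Delta}\ge\xi$ follows, or that the bound $\mathbb{E}|T|\ge(1-\xi)^\Delta n\ge \xi n\ge m$ holds; I would phrase the hypothesis on $\xi$ exactly as needed so that $\mathbb{E}|T|\ge m=\lfloor\xi n\rfloor$. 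Then there exists an outcome with $|T|\ge m$; fix such an $S$ and pick $T'\subseteq T$ with $|T'|=m$. Now consider the bipartite subgraph $H'$ induced on parts $(T', B\setminus S)$, which is an $m\times m$ bipartite graph; crucially, every vertex of $T'$ has all $\Delta$ of its neighbours inside $S$, so in $H'$ its degree is $0\le \Delta-1$. Wait—that gives degree $0$, not $\Delta-1$; the correct construction is instead to let $T$ be the set of vertices of $A$ with \emph{at least $\Delta-1$} neighbours in $S$ (equivalently at most one edge to $B\setminus S$), exactly paralleling the "at least $\Delta-2$" choice in \Cref{thm:lower}. With that adjustment, in $H'=H[T'\cup(B\setminus S)]$ every vertex of $T'$ has degree at most $1\le\Delta-1$, hence trivially at most $\Delta-1$; applying the definition of $f$, $H'$ contains a bi-hole of size $f(m,\Delta-1)=f(\lfloor\xi n\rfloor,\Delta-1)$, and this bi-hole is a bi-hole of $H$ as well, which is exactly the claim.

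The expectation computation is then: $\mathbb{P}[a\in T]\ge \binom{\Delta}{\Delta-1}\binom{n-\Delta}{(n-m)-(\Delta-1)}\binom{n}{n-m}^{-1}+\binom{n-\Delta}{n-m-\Delta}\binom{n}{n-m}^{-1}\ge (1-\xi)^{\Delta}$, the last estimate being crude but sufficient, and one concludes $\mathbb{E}|T|\ge(1-\xi)^{\Delta}n\ge \xi n\ge m$ provided $\xi$ is chosen as stipulated. I expect the main obstacle to be purely bookkeeping: getting the binomial ratios bounded cleanly below by a power of $(1-\xi)$ uniformly in $n$ (the inequality $\prod_{j}\frac{n-m-j}{n-j}\ge\bigl(\frac{n-m}{n}\bigr)^{\Delta}$ needs $n-m\ge\Delta$ and monotonicity of $t\mapsto\frac{t-j}{t}$, i.e.\ of the individual factors as $j$ increases versus the target), and matching the precise form of the hypothesis on $\xi$ — namely verifying that $1-\xi^{\Delta}\ge\xi$ is exactly what makes $\mathbb{E}|T|\ge m$ go through — rather than anything structurally deep. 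No concentration is needed here since a first-moment argument already produces one good outcome.
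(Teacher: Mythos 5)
Your set~$T$ is wrong, and the mismatch you notice between $(1-\xi)^{\Delta}\ge\xi$ and the stated condition $1-\xi^{\Delta}\ge\xi$ is not a cosmetic issue to be patched by ``rephrasing the hypothesis'' --- it is the signal that the construction is off. The parallel with the proof of \Cref{thm:lower} should be: there one wants vertices with at most~$2$ edges into~$B\setminus S$ (so as to invoke $f(\cdot,2)$), which is the same as at least~$\Delta-2$ neighbours in~$S$; here one wants vertices with at most~$\Delta-1$ edges into~$B\setminus S$ (so as to invoke $f(\cdot,\Delta-1)$), which is the same as having \emph{at least one} neighbour in~$S$. In other words, the correct~$T$ is simply~$N(S)$. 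You instead take~$T$ to be the set of vertices with at least~$\Delta-1$ neighbours in~$S$ (at most one edge to~$B\setminus S$); this is a far more demanding event, so your expectation bound collapses to roughly $(1-\xi)^{\Delta}n$ (plus a single-error term), which for the~$\xi$ allowed by the lemma is much smaller than~$\xi n$. To see concretely that it fails: for $\Delta=3$ the condition $1-\xi^{3}\ge\xi$ permits $\xi\approx0.68$, but $(1-\xi)^{3}+3\xi(1-\xi)^{2}\approx0.24<0.68$, so your first-moment argument does not produce an outcome with $|T|\ge m$.

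With the correct $T=N(S)$, the argument is clean and does match the claimed hypothesis. For a fixed $a\in A$ of degree~$\Delta$, $\mP[a\notin T]$ is the probability that all~$\Delta$ neighbours of~$a$ avoid~$S$, i.e.\ land in~$B\setminus S$ of size~$\lfloor\xi n\rfloor$; this equals $\binom{n-\Delta}{\,|S|\,}/\binom{n}{\,|S|\,}=\binom{\lfloor\xi n\rfloor}{\Delta}/\binom{n}{\Delta}\le\xi^{\Delta}$. Hence $\mE|N(S)|\ge n\bigl(1-\xi^{\Delta}\bigr)\ge\xi n\ge m$, exactly using $1-\xi^{\Delta}\ge\xi$. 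Fixing an outcome with $|N(S)|\ge m$ and any $m$-subset $T'\subseteq N(S)$, every vertex of~$T'$ has at least one neighbour in~$S$ and therefore degree at most $\Delta-1$ in the induced $m\times m$ graph on $(T',B\setminus S)$, yielding a bi-hole of size $f(m,\Delta-1)$. That is the paper's proof; your revision (``at least $\Delta-1$ neighbours in~$S$'') gives vertices of degree at most~$1$, so invoking $f(\cdot,\Delta-1)$ there is also needlessly weak --- another sign the threshold was chosen at the wrong end.
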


\begin{proof}
For simplicity we omit floors in the following. Let~$G$ be a bipartite graph with parts~$A$ and~$B$ each of size~$n$ such that the vertices in~$A$ have
    degrees at most~$\Delta$.  We shall show that there is a set~$S\subset B$, such that $|S|= (1-\xi)n$
    and such that $|N(S)| \geq \xi n$.  To do this, we shall choose~$S$ randomly and uniformly out of all
    subsets of~$B$ of size~$(1-\xi)n$ and show that the expected number~$X$ of vertices from~$A$ with at
    least one neighbor in~$S$ is at least $\xi n$.  Indeed, if~$p$ is the  probability  for a fixed
    vertex in~$A$ not to have a neighbor in $S$, then \[ 
	p = \frac{\binom{n- \Delta}{(1-\xi)n}}{\binom{n}{(1-\xi)n}}.  \]
Using the identity $  \binom{n - l}{k}/ \binom{n}{k} =  \binom{n - k}{l}/\binom{n}{l}$, we see that $p =
    \binom{\xi n}{\Delta}/\binom{n}{\Delta}$. Now, using the inequality $\binom{\delta n}{r} \le \delta^r
    \binom{n}{r}$, which is valid for every $\delta \in (0, 1)$, we find that \[
	p = \frac{\binom{\xi n}{\Delta}}{\binom{n}{\Delta}} \le \xi^\Delta.  \]
Thus, $\mathbb{E}X = n(1-p) \geq n (1- \xi^\Delta) \ge n\xi$ by our choice of $\xi$. 
Consequently, with positive probability $|N(S)|\geq \xi n$. We also have $|B\setminus S| = \xi
n$. Since each vertex  of~$N(S)$ sends at most~$\Delta-1$ edges to~$B\setminus S$, it follows that there
is a bi-hole between~$N(S)$ and $B\setminus S$ of size $f(\xi n,  \Delta-1)$. This completes the proof.
\end{proof}

We make explicit some lower bounds obtained using \Cref{lem:recursion} (and \Cref{lem:max-deg-2}).
\begin{corollary}\label{cor-small-lower}
There exists~$N_0$ such that if~$n\ge N_0$, then
$f(n,3) > 0.34116n$, $f(n,4)> 0.24716n$, $f(n,5) > 0.18657n$, and   $f(n,6) > 0.14516n$.
\end{corollary}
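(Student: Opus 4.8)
The plan is to iterate \Cref{lem:recursion} down to the case $\Delta = 2$, where \Cref{lem:max-deg-2} supplies the exact value $f(m,2) = \ceil{m/2} - 1$. For an integer $k \ge 3$, the hypothesis $1 - \xi^{k} \ge \xi$ of \Cref{lem:recursion} is equivalent to $\xi^{k} + \xi \le 1$; since $t \mapsto t^{k} + t$ is continuous and strictly increasing on $[0,1]$, taking the values $0$ and $2$ at the endpoints, there is a unique $\xi_k \in (0,1)$ with $\xi_k^{k} + \xi_k = 1$, and every $\xi \in (0, \xi_k]$ is an admissible choice of $\xi(k)$ in the lemma. The strongest bound is obtained by using the largest admissible value $\xi = \xi_k$ at each level; solving $\xi_k^{k} + \xi_k = 1$ numerically gives $\xi_3 \approx 0.6823$, $\xi_4 \approx 0.7245$, $\xi_5 \approx 0.7549$, and $\xi_6 \approx 0.7781$.

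Iterating \Cref{lem:recursion}, for every $\Delta \ge 3$ we obtain
\[
  f(n, \Delta) \ \ge\ f\!\left(\floor{\xi_{\Delta} n},\, \Delta - 1\right) \ \ge\ f\!\left(\floor{\xi_{\Delta - 1}\floor{\xi_{\Delta} n}},\, \Delta - 2\right) \ \ge\ \cdots \ \ge\ f(m_{\Delta}, 2),
\]
where $m_{\Delta}$ is produced by $\Delta - 2$ nested applications of maps of the form $m \mapsto \floor{\xi_k m}$ starting from $n$. Each floor discards strictly less than $1$ and each $\xi_k < 1$, so $m_{\Delta} \ge \bigl(\prod_{k=3}^{\Delta}\xi_k\bigr) n - (\Delta - 2)$, and hence, by \Cref{lem:max-deg-2},
\[
  f(n, \Delta) \ \ge\ \ceil{m_{\Delta}/2} - 1 \ \ge\ \frac12\Bigl(\prod_{k=3}^{\Delta}\xi_k\Bigr) n \;-\; \frac{\Delta}{2} \;-\; 1 .
\]

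It remains only to check that the limiting multiplicative constant strictly exceeds the figure claimed in each of the four cases. Numerically, $\tfrac12\xi_3 \approx 0.341164$, $\tfrac12\xi_3\xi_4 \approx 0.247171$, $\tfrac12\xi_3\xi_4\xi_5 \approx 0.186583$, and $\tfrac12\xi_3\xi_4\xi_5\xi_6 \approx 0.145179$, which are respectively larger than $0.34116$, $0.24716$, $0.18657$, and $0.14516$. Choosing $\varepsilon > 0$ smaller than the least of these four (small but positive) gaps and then $N_0$ large enough that $\varepsilon n > \tfrac{\Delta}{2} + 1$ for all $n \ge N_0$ and all $\Delta \le 6$ gives the four stated strict inequalities simultaneously.

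Beyond \Cref{lem:recursion} and \Cref{lem:max-deg-2} the argument has essentially no content; the two points to be careful about are (i) verifying that the best admissible parameters in \Cref{lem:recursion} are precisely the roots of $\xi^{k} + \xi = 1$, and (ii) tracking the additive rounding loss from the nested floors (and the $-1$ in the base case), which is a constant depending only on $\Delta$ and is therefore absorbed once $n$ is large, as the limiting constant sits strictly above the target. I do not anticipate a genuine obstacle; the only mild work is confirming the numerical inequalities above, whose tightest instance ($\Delta = 3$) still leaves a margin of roughly $4 \times 10^{-6}$.
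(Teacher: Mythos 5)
Your proof matches the paper's intended approach: the corollary is presented as a direct consequence of iterating \Cref{lem:recursion} down to the base case supplied by \Cref{lem:max-deg-2}, and you have carried out exactly that iteration, correctly identified the optimal parameter at each step as the root of $\xi^k+\xi=1$, and verified the numerics (with the rounding losses absorbed for large $n$). Nothing to add.
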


The next natural step regarding small values of~$\Delta$ is to evaluate how good the bounds written in
\Cref{cor-small-lower} are. The following upper bounds are obtained by analysing the pairing model (also known as the \emph{configuration model}) to build random regular graphs, tailored to the bipartite setting.

\begin{lemma}\label{lem-pairing}
    Let~$\Delta$ be an integer greater than~$2$, and assume that~$\beta\in(0,1)$ is such that \[
    \frac{(1-\beta)^{2\Delta(1-\beta)}}{\beta^{2\beta}(1-\beta)^{2(1-\beta)}(1-2\beta)^{\Delta(1-2\beta)}}
    <1.  \] 
    Then there exists~$N_0=N_0(\beta)$ such that for every~$n>N_0$ we have $f(n, \Delta) \le f^*(n, \Delta) < \beta n$. In particular, for $n$ sufficiently large there exists a
$\Delta$-regular $n\times n$ bipartite graph with no bi-hole of size at least~$\beta n$.
\end{lemma}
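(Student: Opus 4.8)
The plan is to use the bipartite configuration (pairing) model to produce, with positive probability, a $\Delta$-regular $n \times n$ bipartite graph with no bi-hole of size $\beta n$, and then show the expected number of large bi-holes tends to $0$ under the stated hypothesis on $\beta$. First I would set up the model: each vertex in $A$ and each vertex in $B$ is equipped with $\Delta$ half-edges (stubs), giving $\Delta n$ stubs on each side, and a uniformly random perfect matching between the $A$-stubs and the $B$-stubs is chosen, which is projected to a (multi)graph $\mathbf{G}$ on $A \cup B$. For fixed subsets $X \subseteq A$ and $Y \subseteq B$ with $|X| = |Y| = \beta n$, I would compute the probability $q$ that $(X,Y)$ is a bi-hole, i.e.\ that none of the $\Delta|X|$ stubs on $X$ is matched to any of the $\Delta|Y|$ stubs on $Y$. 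This probability is a ratio of falling factorials: $q = \prod_{i=0}^{\Delta\beta n - 1} \frac{\Delta n - \Delta\beta n - i}{\Delta n - i}$, which after grouping is well-approximated (by Stirling) by $\left(\frac{(1-\beta)^{1-\beta}}{ (1-2\beta)^{1-2\beta}\, \beta^{\beta}\cdot(1-\beta)^{\text{(bookkeeping)}}}\right)^{\Delta n (1+o(1))}$ — the precise grouping is exactly what produces the exponent $\Delta(1-\beta)$, $\Delta(1-2\beta)$ appearing in the statement.

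Next I would take a union bound over all choices of $X$ and $Y$. The number of such pairs is $\binom{n}{\beta n}^2$, which by Stirling is $\left(\beta^{-\beta}(1-\beta)^{-(1-\beta)}\right)^{2n(1+o(1))}$. Multiplying this by $q$ and extracting the base of the resulting exponential in $n$, I get exactly the left-hand side of the displayed inequality in the lemma: $\frac{(1-\beta)^{2\Delta(1-\beta)}}{\beta^{2\beta}(1-\beta)^{2(1-\beta)}(1-2\beta)^{\Delta(1-2\beta)}}$. Since this quantity is assumed to be strictly less than $1$, the expected number of bi-holes of size $\beta n$ in $\mathbf{G}$ is at most $C^{n(1+o(1))}$ for some constant $C < 1$, hence tends to $0$; in particular, for $n$ large enough there is a specific outcome of the pairing model with no bi-hole of size $\beta n$. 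Finally, I would need to pass from the multigraph produced by the pairing model to an honest simple $\Delta$-regular bipartite graph: the standard fact is that a uniformly random pairing yields a simple graph with probability bounded below by a positive constant (depending only on $\Delta$), so conditioning on simplicity only multiplies the expectation by a bounded factor and does not affect the conclusion that it tends to $0$. A simple $\Delta$-regular $n \times n$ bipartite graph has $\deg(a) = \Delta$ for all $a \in A$ and $\Delta(\mathbf{G}) = \Delta$, so its existence witnesses both $f(n,\Delta) < \beta n$ and $f^*(n,\Delta) < \beta n$.

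The main obstacle I expect is the asymptotic bookkeeping: one must carefully apply Stirling's formula to the product defining $q$ (which ranges over $\Delta \beta n$ factors and should be split according to which block of stubs is being matched) and to the two binomial coefficients, keeping track of polynomial factors that get absorbed into the $(1+o(1))$ in the exponent, and then verify that the base of the combined exponential is precisely the ratio written in the hypothesis. A secondary technical point is justifying that conditioning on the pairing being a simple graph is harmless; here I would either cite the standard bipartite analogue of Bollob\'as's result on the probability that the configuration model is simple, or note that it suffices to observe that the expected number of bi-holes of size $\beta n$ still tends to zero even without conditioning, and that a random pairing is simple with probability at least some $c(\Delta) > 0$, so a simple graph avoiding all such bi-holes must exist. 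Neither of these is conceptually hard, but the first requires a clean and honest computation.
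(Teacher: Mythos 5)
Your proposal is correct and follows essentially the same route as the paper: the bipartite configuration model, the falling-factorial computation of the no-edge probability for a fixed pair $(X,Y)$ (which the paper expresses as $\frac{((\Delta n - \Delta k)!)^2}{(\Delta n)!(\Delta n - 2\Delta k)!}$ with $k=\beta n$), a union bound over $\binom{n}{\beta n}^2$ pairs, Stirling, and finally the observation that since a random pairing is simple with probability bounded away from zero while the expected number of bi-holes tends to zero, a simple $\Delta$-regular witness exists. The only cosmetic difference is that the paper cites McKay--Wormald--Wysocka for the bipartite enumeration needed to lower-bound the simplicity probability, whereas you invoke the standard fact directly; both serve the same purpose.
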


\begin{proof} 
We shall work with the configuration model of Bollob{\'a}s~\cite{Bol81} suitably altered to produce a bipartite graph.  Fix an integer~$n$ and consider two sets
    of~$\Delta n$ (labelled) vertices each:~$X=\{x_1^1,\dotsc,x_1^\Delta,\dotsc,x_n^1,\dotsc,x_n^\Delta\}$ and
$Y=\{y_1^1,\dotsc,y_1^\Delta,\dotsc,y_n^1,\dotsc,y_n^\Delta\}$.
Choose a perfect matching  $F$ between $X$ and $Y$ uniformly at random. We call~$F$ a \emph{pairing}.

Given a pairing~$F$, for each~$i\in\{1,\dotsc,n\}$ the vertices~$x_i^1,\dotsc,x_i^\Delta$ are identified into a new
    vertex~$x_i$, and similarly the vertices~$y_i^1,\dotsc,y_i^\Delta$ are identified into a new vertex~$y_i$.
    This yields a multi-graph~$G'$.  We prove that with positive probability $G'$ is a simple graph. To see why this holds,
    first notice that the total number of different pairings is~$(\Delta\cdot n)!$.
    Second, each fixed (labelled) $\Delta$-regular $n\times n$ bipartite graph arises from precisely
    $(\Delta!)^{2n}$ different pairings (because for each vertex $x_i$ we can freely permute the vertices $\{x_i^1, \ldots, x_i^\Delta\}$).
    Third, McKay, Wormald and Wysocka~\cite{MWW04} proved that the
    number of different labelled $\Delta$-regular $n\times n$ bipartite graphs is \[
        (1+o(1))\exp\left(-\frac{{(\Delta-1)}^2}2\cdot((\Delta-1)^2+1)\right)\frac{(\Delta\cdot
        n)!}{({\Delta!)}^{2n}},
    \]
    which is at least $c \frac{(\Delta\cdot n)!}{(\Delta!)^{2n}}$ for some~$c>0$. 
    Combining these three facts, we find that $\mathbb{P}(\text{$G'$ is a graph}) \ge c > 0$,
     as announced.
        
    Now, fix~$k=k(n)=\beta\cdot n$ for some~$~\beta\in(0,1)$. 
    For each~$i\in\{1,\dotsc,n\}$, set~$X_i \coloneqq \{x_i^1,\dotsc,x_i^\Delta\}$ and~$Y_i \coloneqq
    \{y_i^1,\dotsc,y_i^\Delta\}$.  Fixing a family of~$k$ sets~$\mathcal{X} = \{X_{i_1},\dotsc,X_{i_k}\}$ and also $\mathcal{Y} = \{Y_{i_1},\dotsc,Y_{i_k}\}$, let $W(\mathcal{X}, \mathcal{Y})$ be the event that the union~$U$ of the sets in $\mathcal{X} \cup \mathcal{Y}$ spans no edge in $F$ (and thus corresponds to a bi-hole in $G'$). Let us now find the probability of $W(\mathcal{X}, \mathcal{Y})$.
    There are~$2\Delta k$ edges incident to vertices in~$U$. The number of ways to choose these~$2\Delta k$ edges
    is as follows: if the edge is incident to a vertex in~$\bigcup_{j=1}^k X_{i_j}$, then the other
    end-vertex must belong to~$Y\setminus\bigcup_{j=1}^kY_{i_j}$, and hence there are \[
        (\Delta n-\Delta k)\dotsc(\Delta n-2\Delta k+1)\]
    different ways of choosing the edges incident to a vertex in~$\bigcup_{j=1}^k X_{i_j}$.
    The situation is analogous for edges incident to~$\bigcup_{j=1}^k Y_{i_j}$, yielding a total of $(\Delta n-\Delta k)^2\dotsc(\Delta n-2\Delta k+1)^2$
    ways to choose the~$2\Delta k$ edges incident to a vertex in~$U$.  For each such choice there are $(\Delta n-2\Delta k)!$ ways to choose the remaining edges, for a total of $(\Delta n-2\Delta k)!\cdot(\Delta n-\Delta k)^2\dotsc(\Delta n-2\Delta k+1)^2$
    different pairings in which~$U$ spans no edge. It follows that 
    \[
        \mathbb{P}(W(\mathcal{X}, \mathcal{Y})) = \frac{(\Delta n-2\Delta k)!\cdot(\Delta n-\Delta k)^2\dotsc(\Delta n-2\Delta k+1)^2}{(\Delta n)!}=
    \frac{{((\Delta n-\Delta k)!)}^2}{(\Delta n)!(\Delta n-2\Delta k)!}.
\]

Let $W = \bigcup_{\mathcal{X}, \mathcal{Y}} W(\mathcal{X}, \mathcal{Y})$ be the event that~$F$ contains a
bi-hole of size~$k$. Taking a union bound over all $\binom{n}{k}^2$ choices of $(\mathcal{X},
    \mathcal{Y})$,  we find that
    \begin{equation}\label{eq-expected}
    \mathbb{P}(W) \le {\binom{n}{k}}^2\frac{{((\Delta n-\Delta k)!)}^2}{(\Delta n)!(\Delta n-2\Delta
        k)!}.
\end{equation}

Using Stirling's approximation, \[
    \sqrt{2\pi n}\left(\frac{n}{e}\right)^n\exp\left(-\frac{1}{12n+1}\right)\le n! \le
\sqrt{2\pi n}\left(\frac{n}{e}\right)^n\exp\left(-\frac{1}{12n}\right),\]
in~\eqref{eq-expected},
and ignoring the exponential factors (they can be bounded from above
by~$\exp(1/(cn))$ for some positive constant~$c$, and hence be made arbitrarily close
    to~$1$), we thus obtain \[
        {\binom{n}{k}}^2 
        \approx \frac{1}{2\pi\beta(1-\beta)n}{\left(\frac{1}{\beta^{2\beta}(1-\beta)^{2(1-\beta)}}\right)}^n\]
    and \[
    \frac{{(\Delta n-\Delta k)!}^2}{(\Delta n)!(\Delta n-2\Delta k)!}
    \approx \frac{1-\beta}{\sqrt{1-2\beta}}{\left(\frac{(1-\beta)^{2\Delta (1-\beta)}}{(1-2\beta)^{\Delta (1-2\beta)}}\right)}^n.\]
Hence, if \[
    \frac{(1-\beta)^{2\Delta(1-\beta)}}{\beta^{2\beta}(1-\beta)^{2(1-\beta)}(1-2\beta)^{\Delta(1-2\beta)}}
    <1,  \]
then $\mathbb{P}(W) \rightarrow 0$ as $n \rightarrow \infty$. Thus, with positive probability $G'$ is a $\Delta$-regular graph with no bi-hole of size at least $k = \beta n$ (for $n$ sufficiently large), as stated.
\end{proof}

\noindent
Performing explicit computations in \Cref{lem-pairing} for specific values of~$\Delta$ yields the
following bounds (see also Table~\ref{table:small-delta}).

\begin{corollary}\label{cor-upperbounds}
There exists~$N_0$ such that if~$n\ge N_0$, then $f^*(n, 3)<.4591n$,
 $f^*(n,4)<.4212n $, $f^*(n,5)<.3887n$, and
 $f^*(n,6)<.3621n$.
\end{corollary}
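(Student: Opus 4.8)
This is a direct consequence of \Cref{lem-pairing}: for each $\Delta\in\{3,4,5,6\}$ one only needs to exhibit a value $\beta=\beta(\Delta)$ — not exceeding the number claimed in the corollary — for which the hypothesis of \Cref{lem-pairing} holds, that is, for which
\[
g_\Delta(\beta)\coloneqq\frac{(1-\beta)^{2\Delta(1-\beta)}}{\beta^{2\beta}(1-\beta)^{2(1-\beta)}(1-2\beta)^{\Delta(1-2\beta)}}<1 .
\]
The plan is thus to locate, for each such $\Delta$, the threshold in $\beta$ past which this inequality holds, and then to check it at the rounded values appearing in the statement.

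First I would pass to logarithms. For $\beta\in(0,\tfrac12)$ set
\[
\phi_\Delta(\beta)\coloneqq\log g_\Delta(\beta)=2(\Delta-1)(1-\beta)\log(1-\beta)-2\beta\log\beta-\Delta(1-2\beta)\log(1-2\beta),
\]
so that the hypothesis of \Cref{lem-pairing} is exactly $\phi_\Delta(\beta)<0$. I would then record the behaviour of $\phi_\Delta$ at the two ends of the interval $(0,\tfrac12)$. As $\beta\to0^+$ a short Taylor expansion shows $\phi_\Delta(\beta)=-2\beta\log\beta+O(\beta)$, which is positive for small $\beta>0$ since the term $-2\beta\log\beta$ dominates; while $\phi_\Delta(\beta)\to(2-\Delta)\log 2<0$ as $\beta\to\tfrac12^-$ (using that $x^x\to1$ as $x\to0^+$). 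Hence $\phi_\Delta$ has a largest zero $\beta_\Delta^\ast\in(0,\tfrac12)$; having no zero on $(\beta_\Delta^\ast,\tfrac12)$, it keeps constant sign there, and since its limit at $\tfrac12$ is negative we get $\phi_\Delta<0$ on all of $(\beta_\Delta^\ast,\tfrac12)$. Therefore \Cref{lem-pairing} applies with every $\beta\in(\beta_\Delta^\ast,\tfrac12)$ and yields $f^\ast(n,\Delta)<\beta n$ for $n>N_0(\beta)$.

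It then remains to pin down $\beta_\Delta^\ast$ numerically for $\Delta\in\{3,4,5,6\}$; these are elementary computations, and they give $\phi_3(0.4591)<0$, $\phi_4(0.4212)<0$, $\phi_5(0.3887)<0$, and $\phi_6(0.3621)<0$, each of the four chosen numbers lying just above the corresponding root $\beta_\Delta^\ast$. Applying the previous paragraph with these four values of $\beta$, and letting $N_0$ be the largest of the four values $N_0(\beta)$ produced by \Cref{lem-pairing}, gives all four bounds simultaneously.

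The only real work — and hence essentially the only obstacle — is this numerical verification: one must check the strict inequalities $\phi_\Delta(\beta)<0$ at the stated rounded values, which requires keeping a small positive margin between each chosen $\beta$ and the root $\beta_\Delta^\ast$. There is no conceptual difficulty here, since all of the probabilistic content has already been absorbed into \Cref{lem-pairing}.
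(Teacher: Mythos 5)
Your proposal is correct and matches the paper's approach: the paper likewise obtains these bounds by plugging the specific values of $\Delta$ into \Cref{lem-pairing} and verifying numerically that the stated $\beta$ values satisfy its hypothesis. Your added discussion of the sign of $\phi_\Delta$ at the endpoints of $(0,\tfrac12)$ and of taking $N_0$ to be the maximum of the four thresholds is a tidy way to organize what the paper leaves implicit, but it is the same computation.
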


In particular, one sees that $f(n, 3) \le f^*(n, 3) < .4591n$ for sufficiently large~$n$. Thus, combined
with our earlier work, it follows that $.3411n < f(n,
3) < .4591 n$. It would be very interesting to improve either the lower or upper bound.

\subsection{Bounding \texorpdfstring{$f(n, \Delta)$}{f(n,Delta)} when \texorpdfstring{$\Delta$}{Delta} is large}\label{sec:large-delta}

In this section we address the behavior of $f(n, \Delta)$ for large $\Delta$ and prove
\Cref{thm:large-random}. Before doing so, let us note the following simple result, which shows that
\Cref{thm:upper} is tight (up to constants) when $\Delta$ is linear in $n$.

\begin{proposition}\label{prop:linear-delta}
For any $\varepsilon \in (0, 1)$ there is a constant $c = c(\varepsilon)$ such that for $n$ sufficiently
    large $f(n, (1 - \varepsilon)n) \ge c\log n$.
\end{proposition}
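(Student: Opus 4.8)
The plan is to pass to the bipartite complement and run a Kővári--Sós--Turán style double-counting argument. Write $\overline{G} = (A \cup B, \overline{E})$ for the bipartite complement of $G$. Since $\deg_G(a) \le (1-\varepsilon)n$ for every $a \in A$, we have $\deg_{\overline{G}}(a) = n - \deg_G(a) \ge \varepsilon n$ for every $a \in A$. A bi-hole of size $k$ in $G$ is precisely a copy of $K_{k,k}$ in $\overline{G}$ respecting sides, so it suffices to show that \emph{every} such $\overline{G}$ contains $K_{k,k}$ for $k \coloneqq \lfloor c \log n \rfloor$, where $c = c(\varepsilon) > 0$ is to be chosen; any positive $c < 1/\log(2/\varepsilon)$ will work.

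Suppose for contradiction that $\overline{G}$ contains no such $K_{k,k}$. This is equivalent to saying that every $k$-element subset $S \subseteq B$ has at most $k-1$ common neighbors in $A$ (otherwise, picking $k$ of those common neighbors together with $S$ would yield the forbidden $K_{k,k}$). Now count the pairs $(a, S)$ with $a \in A$ and $S$ a $k$-element subset of $N_{\overline{G}}(a) \subseteq B$. Grouping by $S$ gives the upper bound $(k-1)\binom{n}{k}$. Grouping by $a$, using convexity of $x \mapsto \binom{x}{k}$ together with $\deg_{\overline{G}}(a) \ge \varepsilon n$ (and $\varepsilon n \ge k$, which holds for $n$ large), gives the lower bound $n\binom{\varepsilon n}{k}$. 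Hence
\[ n \binom{\varepsilon n}{k} \le (k-1)\binom{n}{k}. \]

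Finally I would show this inequality fails for $n$ large. Since $\frac{\varepsilon n - i}{n - i}$ is decreasing in $i$ for $0 \le i \le k-1$ (because $\varepsilon < 1$), and $k \le \varepsilon n / 2$ for $n$ sufficiently large, we get
\[ \frac{\binom{\varepsilon n}{k}}{\binom{n}{k}} = \prod_{i=0}^{k-1}\frac{\varepsilon n - i}{n-i} \ge \left(\frac{\varepsilon n - k + 1}{n}\right)^{k} \ge \left(\frac{\varepsilon}{2}\right)^{k}. \]
Plugging in $k = \lfloor c\log n\rfloor$ yields $(\varepsilon/2)^{k} \ge n^{-c\log(2/\varepsilon)}$, so the displayed inequality forces $n^{1 - c\log(2/\varepsilon)} \le k - 1 < c\log n$. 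Since $c < 1/\log(2/\varepsilon)$, the exponent $1 - c\log(2/\varepsilon)$ is a positive constant, so the left-hand side grows polynomially in $n$ while the right-hand side grows only logarithmically — a contradiction once $n$ is large enough. This shows $f(n, (1-\varepsilon)n) \ge \lfloor c\log n\rfloor$, as claimed.

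This argument is essentially routine; the only points requiring a little care are the elementary lower bound on the binomial ratio $\binom{\varepsilon n}{k}/\binom{n}{k}$ (where one must ensure $k$ stays below $\varepsilon n/2$) and checking that the choice $c < 1/\log(2/\varepsilon)$ leaves a genuinely positive exponent $1 - c\log(2/\varepsilon)$ so that the polynomial-versus-logarithmic contradiction goes through. No serious obstacle is anticipated.
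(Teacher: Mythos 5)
Your proof is correct and follows essentially the same route as the paper: pass to the bipartite complement (whose $A$-side now has minimum degree at least $\varepsilon n$) and apply the standard K\H{o}v\'ari--S\'os--Tur\'an double-counting argument to extract a $K_{k,k}$ with $k=\Theta(\log n)$. The only difference is that you spell out the counting explicitly (using the minimum-degree form rather than the edge-count form), which the paper leaves as a citation to the standard argument.
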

\begin{proof}
Let $\varepsilon$ be a positive constant, $\varepsilon <1$.
To show the lower bound on $f(n, (1 - \varepsilon)n)$,  consider a
bipartite graph~$G$ with parts~$A, B$ of size~$n$ each such that $\deg(x) \le (1 - \varepsilon)n$
for every~$x \in A$.
Letting~$G'$ be the bipartite complement of~$G$, we see that~$G'$ has at least
$\varepsilon n^2$ edges. The result then follows from the fact that for any $\varepsilon \in (0, 1)$ and
sufficiently large~$n$, any $n \times n$ bipartite graph with~$\varepsilon n^2$ edges contains a~$K_{t,
t}$ where $t = c \log n$ for some constant $c = c(\varepsilon)$. This can be proved using the standard
K\H{o}v{\'a}ri-S{\'o}s-Tur{\'a}n~\cite{KST54} double counting argument.
\end{proof}

Therefore, the behavior of~$f(n, \Delta)$ is clear whenever~$\Delta$ is linear, aside from more precise
estimates of the constants involved. What happens when~$\Delta$ is very large,
more precisely, when $\Delta = n - o(n)$? This is partly addressed in \Cref{thm:large-random}, which we now prove.

\begin{proof}[Proof of Theorem~\ref{thm:large-random}]

Recall that the classical Zarankiewicz number,~$z(n, t)$, is the largest number of edges in an $n\times
n$ bipartite graph that contains no copy of~$K_{t,t}$.  Assume first that~$t\geq 4$.

\bigskip
The lower bound on~$\Delta(t)$, 
follows from standard bounds on Zarankiewicz numbers.  Indeed, $z(n, t) \le
    Cn^{2 - 1/t}$ for some constant $C = C(t)$,  see for example \cite{KST54}.
    Thus, any $n\times n$ bipartite graph on at least $Cn^{2 - 1/t}$ edges
    contains a copy of $K_{t,t}$, and so any  $n\times n$ bipartite graph on at
    most $n^2-  Cn^{2 - 1/t}$ edges contains a bi-hole of size $t$.  In
    particular, any $n\times n$ bipartite graph with maximum degree at most $n
    - Cn^{1-1/t}$ contains a bi-hole of size $t$.  So the announced lower bound
    in \Cref{thm:large-random} holds.

\bigskip

To determine the stated upper bound on~$\Delta(t)$, we shall prove the
existence of a~$K_{t, t}$-free bipartite $n\times n$ graph with the additional
    constraint that the minimum degree of vertices (on one side) is large.  For
    that, we shall alter the standard random construction used to prove lower
    bounds on Zarankiewicz numbers.  For a graph~$F$, we shall carefully
    control~$X=X_F$,  the total number of copies of~$K_{t, t}$ in~$F$, as well
    as $X(v)=X_F(v)$, the number of copies of~$K_{t,t}$ containing a vertex~$v$ in~$F$.

\bigskip
    
 Let~$N \coloneqq 2n$ and $p \coloneqq cN^{-2/(t+1)}$, for a constant~$c$ to be determined later.  
Consider the bipartite  binomial random graph $G' \sim G(N, n, p)$ with parts~$A$ and~$B$ of sizes~$N$ and~$n$, respectively.
By Markov's inequality, $\mP(X \ge 2\mE[X]) \le 1/2$.
Since $\deg(v)$, for~$v\in A$, is distributed as~$\bin(n, p)$, Chernoff's inequality~\eqref{cher-low} from \Cref{lem:chernoff} 
with $\varepsilon \coloneqq 1/2$, implies that  with high probability, every vertex $v \in A$ has degree at least~$pn/2$. 
So with positive probability we  have $X \le 2\mE[X] = 2 \binom{N}{t}\binom{n}{t} p^{t^2} \le 2\binom{N}{t}^2p^{t^2}$ and $\deg(v) \ge pn/2$ for every~$v \in A$.

\bigskip
    
    Fix a bipartite graph~$G$ with these
    properties, i.e., $G$ is a bipartite graph with parts $A$ and $B$,  the number $X=X_G$ of copies of~$K_{t,t}$ satisfies $X\leq 2 \binom{N}{t}^2 p^{t^2}$ 
    and $\deg(v) \ge pn/2$ for every~$v \in A$.   Observe that there are fewer than~$n$ vertices~$v$ in~$A$ with
    $X(v) > \frac{2t}{n} 2 \binom{N}{t}^2 p^{t^2}$. Indeed, otherwise 
   $X\geq  n \frac{4t}{n} \binom{N}{t}^2 p^{t^2} /t$, a contradiction.

   \bigskip
   
   Let~$A' \subset A$ be a set of~$n$ vertices such that $X(v) \le \frac{2t}{n} 2 \binom{N}{t}^2 p^{t^2}$ for all $v \in
A'$.  Let~$H'$ be the subgraph of~$G$ induced by~$A' \cup B$. Finally, let $H$ be obtained from $H'$ by removing
    an edge from each copy of $K_{t, t}$. Thus~$H$ has no copies of~$K_{t,t}$.
It remains to check that the degrees of vertices in~$A$ are sufficiently large.
Indeed, for any~$v\in A$, 
\begin{align*}
\deg_H(v) &\geq  \deg_{H'}(v) - X(v) \\
 & \geq  np/2 - \frac{4t}{n}  \binom{N}{t}^2 p^{t^2} \\
 &\geq  Np/4 - \frac{4t}{n} N^{2t}p^{t^2} \\
 & \geq  \frac{c}{4}N^{1 -
    \frac{2}{t+1}}- \frac{4t}{n}  N^{2t}p^{t^2} \\
    &> (c/4 - 4tc^{t^2})n^{1 - \frac{2}{t+1}}\\
    & \geq \frac{1}{16}n^{1 - \frac{2}{t+1}},
    \end{align*}
where the last inequality holds for~$c=1/2$.
This concludes the proof of the general upper bound for~$t\geq 4$.

\bigskip

Now, let~$t=2$ or~$t=3$.    
We have $z(n, 2) = (1 + o(1))n^{3/2}$ and $z(n, 3) = (1 + o(1))n^{5/3}$, see K\H{o}vári, Sós \&
Turán~\cite{KST54}, Füredi~\cite{Fur96}, and   Alon, Rónyai \& Szabó~\cite{ARS99} (we refer the
interested reader to the survey by Füredi \& Simonovits~\cite{FS}).  In fact, the extremal constructions
are almost regular and therefore show that there are $n\times n$ bipartite graphs with no~$K_{2,2}$
(no~$K_{3,3}$) with minimum degree at least $(1+o(1))n^{1/2}$ (at least $(1+o(1))n^{2/3}$), respectively.
This completes the proof.
\end{proof}

\section{Concluding remarks}\label{conclusions}

We have made progress in determining the asymptotic behavior of~$f(n, \Delta)$. However, we could not
obtain better bounds for small~$\Delta$. The most glaring open problem is the case~$\Delta = 3$.

\begin{problem}\label{prob:delta-equals-3}
Determine the value of~$f(n, 3)$ for~$n$ sufficiently large.
\end{problem}

Further, recall that \Cref{thm:lower} does not cover the full range of $\Delta$ that is covered in
\Cref{thm:upper}. In particular, \Cref{thm:lower} shows that $f(n, \Delta) =
\Omega(\frac{\log\Delta}{\Delta}n)$ for $n \ge C\Delta \log \Delta$, while the upper bound in
\Cref{thm:upper} holds for $n \ge C' \frac{\Delta}{\log \Delta}$. It would be interesting to address what
happens in the range between these two estimates. 

\begin{problem}\label{prob:tricky-range}
Determine whether or not $f(n, \Delta) = \Theta(\frac{\log \Delta}{\Delta}n)$ whenever
    $c\frac{\Delta}{\log\Delta} \le n \le  c'\Delta\log\Delta$, for suitable positive constants $c$ and~$c'$.
\end{problem}

\section*{Acknowledgements} 
The authors thank David Conlon for interesting discussions and motivating them to further investigate this problem.
    
\begin{bibdiv} 
\begin{biblist} 

\bib{ARS99}{article}{
   author={Alon, Noga},
   author={R\'{o}nyai, Lajos},
   author={Szab\'{o}, Tibor},
   title={Norm-graphs: variations and applications},
   journal={J. Combin. Theory Ser. B},
   volume={76},
   date={1999},
   number={2},
   pages={280--290},
   issn={0095-8956},
}
    
\bib{ATW}{report}{
    author = {Axenovich, Maria},
    author = {Tompkins, Casey},
    author = {Weber, Lea},
    title = {Large homogeneous subgraphs in bipartite graphs with forbidden induced subgraphs},
    eprint = {1903.09725},
    status = {unpublished},
    year = {2019},
} 

\bib{BGMV}{article}{
   author={Balbuena, C.},
   author={Garc\'{\i}a-V\'{a}zquez, P.},
   author={Marcote, X.},
   author={Valenzuela, J. C.},
   title={New results on the Zarankiewicz problem},
   journal={Discrete Math.},
   volume={307},
   date={2007},
   number={17-18},
   pages={2322--2327},
   issn={0012-365X},
}

\bib{BGMV-1}{article}{
   author={Balbuena, C.},
   author={Garc\'{\i}a-V\'{a}zquez, P.},
   author={Marcote, X.},
   author={Valenzuela, J. C.},
   title={Extremal $K_{(s,t)}$-free bipartite graphs},
   journal={Discrete Math. Theor. Comput. Sci.},
   volume={10},
   date={2008},
   number={3},
   pages={35--48},
}
 
\bib{beineke-schwenk}{article}{
   author={Beineke, Lowell W.},
   author={Schwenk, Allen J.},
   title={On a bipartite form of the Ramsey problem},
   conference={
      title={Proceedings of the Fifth British Combinatorial Conference},
      address={Univ. Aberdeen, Aberdeen},
      date={1975},
   },
   book={
      publisher={Utilitas Math., Winnipeg, Man.},
   },
   date={1976},
   pages={17--22. Congressus Numerantium, No. XV},
}

\bib{Bol81}{article}{
   author={Bollob\'{a}s, B\'{e}la},
   title={The independence ratio of regular graphs},
   journal={Proc. Amer. Math. Soc.},
   volume={83},
   date={1981},
   number={2},
   pages={433--436},
   issn={0002-9939},
}

\bib{B}{article}{
   author={Brooks, R. L.},
   title={On colouring the nodes of a network},
   journal={Proc. Cambridge Philos. Soc.},
   volume={37},
   date={1941},
   pages={194--197},
   issn={0008-1981},
}    

\bib{caro-rousseaux}{article}{
   author={Caro, Yair},
   author={Rousseau, Cecil},
   title={Asymptotic bounds for bipartite Ramsey numbers},
   journal={Electron. J. Combin.},
   volume={8},
   date={2001},
   number={1},
   pages={Research Paper 17, 5~pp.},
}

\bib{Chu}{article}{
   author={Chudnovsky, Maria},
   title={The Erd\H{o}s-Hajnal conjecture---a survey},
   journal={J. Graph Theory},
   volume={75},
   date={2014},
   number={2},
   pages={178--190},
   issn={0364-9024},
}

\bib{Cul}{article}{
   author={\v{C}ul\'{\i}k, K.},
   title={Teilweise L\"{o}sung eines verallgemeinerten Problems von K.
   Zarankiewicz},
   language={German},
   journal={Ann. Polon. Math.},
   volume={3},
   date={1956},
   pages={165--168},
   issn={0066-2216},
}

\bib{EH}{article}{
   author={Erd\H{o}s, P.},
   author={Hajnal, A.},
   title={Ramsey-type theorems},
   note={Combinatorics and complexity (Chicago, IL, 1987)},
   journal={Discrete Appl. Math.},
   volume={25},
   date={1989},
   number={1-2},
   pages={37--52},
   issn={0166-218X},
}

\bib{EHP}{article}{
   author={Erd\H{o}s, Paul},
   author={Hajnal, Andr\'{a}s},
   author={Pach, J\'{a}nos},
   title={A Ramsey-type theorem for bipartite graphs},
   journal={Geombinatorics},
   volume={10},
   date={2000},
   number={2},
   pages={64--68},
   issn={1065-7371},
}

\bib{FPS}{article}{
   author={Fox, Jacob},
   author={Pach, J\'{a}nos},
   author={Suk, Andrew},
   title={Erd\H{o}s-Hajnal conjecture for graphs with bounded VC-dimension},
   conference={
      title={33rd International Symposium on Computational Geometry},
   },
   book={
      series={LIPIcs. Leibniz Int. Proc. Inform.},
      volume={77},
      publisher={Schloss Dagstuhl. Leibniz-Zent. Inform., Wadern},
   },
   date={2017},
   note={Art. No. 43, 15~pp.},
}

\bib{Fur96}{article}{
   author={F\"{u}redi, Zolt\'{a}n},
   title={An upper bound on Zarankiewicz' problem},
   journal={Combin. Probab. Comput.},
   volume={5},
   date={1996},
   number={1},
   pages={29--33},
   issn={0963-5483},
}

\bib{FS}{article}{
   author={F\"{u}redi, Zolt\'{a}n},
   author={Simonovits, Mikl\'{o}s},
   title={The history of degenerate (bipartite) extremal graph problems},
   conference={
      title={Erd\H{o}s centennial},
   },
   book={
      series={Bolyai Soc. Math. Stud.},
      volume={25},
      publisher={J\'{a}nos Bolyai Math. Soc., Budapest},
   },
   date={2013},
   pages={169--264},
}

\bib{GO}{article}{
   author={Griggs, Jerrold R.},
   author={Ouyang, J.},
   title={$(0,1)$-matrices with no half-half submatrix of ones},
   journal={European J. Combin.},
   volume={18},
   date={1997},
   number={7},
   pages={751--761},
   issn={0195-6698},
}

\bib{GST}{article}{
   author={Griggs, Jerrold R.},
   author={Simonovits, Mikl\'{o}s},
   author={Thomas, George Rubin},
   title={Extremal graphs with bounded densities of small subgraphs},
   journal={J.~Graph Theory},
   volume={29},
   date={1998},
   number={3},
   pages={185--207},
   issn={0364-9024},
}

\bib{hattingh-henning}{article}{
   author={Hattingh, Johannes H.},
   author={Henning, Michael A.},
   title={Bipartite Ramsey theory},
   journal={Util. Math.},
   volume={53},
   date={1998},
   pages={217--230},
   issn={0315-3681},
}

\bib{irving}{article}{
   author={Irving, Robert W.},
   title={A bipartite Ramsey problem and the Zarankiewicz numbers},
   journal={Glasgow Math. J.},
   volume={19},
   date={1978},
   number={1},
   pages={13--26},
   issn={0017-0895},
}

\bib{random-graphs}{book}{
   author={Janson, Svante},
   author={\L uczak, Tomasz},
   author={Rucinski, Andrzej},
   title={Random graphs},
   series={Wiley-Interscience Series in Discrete Mathematics and
   Optimization},
   publisher={Wiley-Interscience, New York},
   date={2000},
   pages={xii+333},
   isbn={0-471-17541-2},
}
 
\bib{KPT}{report}{
    author = {Kor\'andi, Daniel},
    author = {Pach, J\'anos },
    author = {Tomon, Istv\'an},
    title = {Large homogeneous submatrices},
    eprint = {1903.06608},
    status = {unpublished},
    year = {2019},
}

\bib{KST54}{article}{
    author={K\H{o}v{\'a}ri, T.},
   author={S\'{o}s, V. T.},
   author={Tur\'{a}n, P.},
   title={On a problem of K. Zarankiewicz},
   journal={Colloq. Math.},
   volume={3},
   date={1954},
   pages={50--57},
   issn={0010-1354},
}

\bib{MWW04}{article}{
   author={McKay, Brendan D.},
   author={Wormald, Nicholas C.},
   author={Wysocka, Beata},
   title={Short cycles in random regular graphs},
   journal={Electron. J. Combin.},
   volume={11},
   date={2004},
   number={1},
   pages={Research Paper 66, 12~pp.},
}

\bib{Ros}{article}{
   author={Rosenfeld, M.},
   title={Independent sets in regular graphs},
   journal={Israel J. Math.},
   volume={2},
   date={1964},
   pages={262--272},
   issn={0021-2172},
}

\bib{thomason}{article}{
   author={Thomason, Andrew},
   title={On finite Ramsey numbers},
   journal={European J. Combin.},
   volume={3},
   date={1982},
   number={3},
   pages={263--273},
   issn={0195-6698},
}

\end{biblist} 
\end{bibdiv} 

\end{document}